\newtheorem{theoremintro}{Theorem}
\newtheorem{corollaryintro}[theoremintro]{Corollary}
\numberwithin{equation}{section}
\newtheorem{theorem}{Theorem}[section]
\newtheorem{corollary}[theorem]{Corollary}
\newtheorem{lemma}[theorem]{Lemma}
\newtheorem{proposition}[theorem]{Proposition}
\theoremstyle{definition}
\newtheorem{definition}[theorem]{Definition}
\newtheorem{remark}[theorem]{Remark}
\newtheorem*{definition*}{Definition}
\newtheorem*{claim*}{Claim}
\newcommand{\cG}{\mathcal{G}}
\newcommand{\cH}{\mathcal{H}}
\newcommand{\bbN}{\mathbb{N}}
\newcommand{\bbR}{\mathbb{R}}
\newcommand{\bbZ}{\mathbb{Z}}
\renewcommand{\phi}{\varphi}
\newcommand{\asdim}{\text{asdim}_B}
\newcommand{\abs}[1]{\left|#1\right|}
\newcommand{\degout}{\deg_{(X, \vv{R})}^{\rm{out}}}
\newcommand{\degoutz}{\deg_{(X, \vv{R}_0)}^{\rm{out}}}
\newcommand{\Degout}{\deg^{\rm{out}}}
\newcommand{\leaves}{\mathrm{leaves}}
\newcommand{\sch}{\mathrm{Sch}}
\title[Hyper-u-amenablity and Hyperfiniteness of Treeable Relations]{Hyper-u-amenablity and Hyperfiniteness of Treeable Equivalence Relations}
\date{}
\author{Petr Naryshkin}
\address{Petr Naryshkin,
Institute for Advanced Study, 1 Einstein Dr, Princeton, NJ 08540, US}
\email{penaryshkin@ias.edu}
\author{Andrea Vaccaro}
\address{Andrea Vaccaro, Institut Camille Jordan,
Université Claude Bernard Lyon 1, 
43 boulevard du 11 novembre 1918, 69622, Villeurbanne, France}
\email{vaccaro@math.univ-lyon1.fr}
\thanks{Both authors were funded by the Deutsche Forschungsgemeinschaft (DFG, German Research Foundation) under Germany’s Excellence Strategy – EXC 2044 – 390685587, Mathematics Münster – Dynamics – Geometry – Structure, the Deutsche Forschungsgemeinschaft (DFG, German Research Foundation) – Project-ID 427320536 – SFB 1442, and by the ERC Advanced Grant 834267 - AMAREC. The first-named author was also partially funded by the Dynasnet European Research Council Synergy project---grant number ERC-2018-SYG 810115, as well as the Marvin V. and Beverly J. Mielke Endowed Fund through the Institute of Advanced Study. 
}
\begin{document}
\begin{abstract}
We introduce the notions of u-amenability and hyper-u-amenability for countable Borel equivalence relations and we show that treeable, hyper-u-amenable countable Borel equivalence relations are hyperfinite. As corollaries of this result, we obtain that if a countable Borel equivalence relation is either:
\begin{enumerate}
\item measure-hyperfinite and equal to the orbit equivalence relation of a free continuous action of a virtually free group on a $\sigma$-compact Polish space,
\item treeable and equal to the orbit equivalence relation of a Borel action of an amenable group on a standard Borel space,
\item treeable, amenable and Borel bounded,
\end{enumerate}
then it is hyperfinite.
\end{abstract}
\maketitle

\tableofcontents

\section{Introduction}
One of the most relevant problems in the study of countable Borel equivalence relations on standard Borel spaces is clarifying the relationship between \emph{amenability}, meant in the sense of \cite{JKL} (see Definition \ref{def:amenability}), and \emph{hyperfiniteness}, which asserts that the equivalence relation is increasing union of Borel equivalence relations whose equivalence classes have finite cardinality. 
While it is well known that hyperfiniteness implies amenability (see \cite{SlamanSteel} and \cite{JKL}*{Proposition 2.13}), whether the converse holds remains a notoriously difficult open problem in the subject (\cite{JKL}*{\S 6.2.B}).

A related unsolved question is the one concerning the relation between hyperfiniteness and {measure-hyperfiniteness}. Recall that a countable Borel equivalence relation over a standard Borel space $X$ is \emph{measure-hyperfinite} if, for every Borel probability measure $\mu$ over $X$, there is some Borel $A \subseteq X$ with $\mu(A) = 1$ and such that the restriction of $E$ to $A$ is hyperfinite. It is currently not known whether measure-hyperfiniteness implies hyperfiniteness, and a positive answer would imply that amenability and hyperfiniteness are equivalent, since, by the celebrated Connes-Feldman-Weiss theorem \cite{CFW}, amenability implies measure-hyperfiniteness.

In this paper, we contribute to these lines of inquiry focusing on \emph{treeable} Borel equivalence relations, namely relations that admit acyclic Borel graphings (the study of these relations finds a large space in the literature; see, e.g., \cites{adams:trees, JKL, gaboriau, hjort:treeable, miller:treeable, tree-like}). Typical examples are the orbit equivalence relations originating from free actions of virtually free groups---that is groups that have a free subgroup of finite index---and our first main result confirms hyperfiniteness for a vast class of measure-hyperfinite equivalence relations of this form. In what follows, given a group action $G \curvearrowright X$, we denote by $E_G^X$ the \emph{orbit equivalence relation} associated to the action, that is, $x E_G^X y$ if and only if there is $g \in G$ such that $gx = y$, for $x,y \in X$.

\begin{theoremintro} \label{thm_main:Fk}
Let $F$ be a countable virtually free group, let $X$ be a $\sigma$-compact Polish space, and let $F \curvearrowright X$ be a continuous free action. If $E_{F}^X$ is measure-hyperfinite---in particular, if $E_{F}^X$ is amenable---then it is hyperfinite.
\end{theoremintro}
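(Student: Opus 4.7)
The strategy is to deduce the theorem from the paper's main result, which (per the abstract) states that every treeable, hyper-u-amenable countable Borel equivalence relation is hyperfinite. The proof therefore splits into two tasks: verifying that $E_{F_k}^X$ is treeable, and verifying that, under the stated hypotheses, $E_{F_k}^X$ is hyper-u-amenable.

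Treeability is immediate. Let $S$ be a free generating set for $F_k$ and consider the Borel graphing
\[
\cG = \{(x, s \cdot x) : x \in X,\ s \in S\}.
\]
Freeness of the action ensures that on each orbit $\cG$ realizes the Cayley graph of $F_k$ with respect to $S$, which is a tree since $F_k$ is free. Hence $\cG$ is an acyclic Borel graphing of $E_{F_k}^X$, so the relation is treeable.

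The substantive step is upgrading measure-hyperfiniteness to hyper-u-amenability. The plan is to use the topological hypotheses in an essential way: write $X = \bigcup_n K_n$ with $K_n$ compact, so that for every $g \in F_k$ the continuous map $x \mapsto g \cdot x$ restricts to a continuous translation of the graphing on compact pieces. Measure-hyperfiniteness supplies, against any Borel probability measure on $X$, a conull set on which the equivalence relation is hyperfinite, and hence on which one has approximately invariant, finitely supported Borel witnesses of amenability. The goal is to promote this purely measure-theoretic data into the uniform Borel witnesses required by hyper-u-amenability, by exploiting that on each compact piece $K_n$ the continuity of the action forces the would-be Følner sets in $\cG$ to vary in a topologically controlled way. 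Combined with the tree structure on orbits, this compact-by-compact control should allow a coherent Borel selection of uniform witnesses on all of $X$.

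The main obstacle is precisely this upgrade from measure to Borel uniformity. A priori, measure-hyperfiniteness discards a (potentially different) null set for each probability measure, and no single global Borel witness is guaranteed. The role of $\sigma$-compactness together with continuity is to rule out pathological accumulation of bad behavior: on each compact $K_n$ one hopes to argue, using a suitable selection or a compactness-of-measures argument, that the measure-theoretic Følner data can be chosen consistently by a Borel map. The acyclicity inherited from the free $F_k$ action, which rules out loops when enlarging Følner sets, is likely used in an essential way to keep the patched witnesses finite and approximately invariant. Once hyper-u-amenability of $E_{F_k}^X$ is established, the main theorem of the paper yields hyperfiniteness, completing the proof.
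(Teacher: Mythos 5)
Your overall architecture matches the paper's: establish treeability via the Schreier graph of a free generating set, establish hyper-u-amenability, and invoke the main theorem. The treeability step is fine. But the substantive step---upgrading measure-hyperfiniteness to hyper-u-amenability---is not actually carried out in your proposal; you correctly identify it as ``the main obstacle'' and then describe only a hope (``a suitable selection or a compactness-of-measures argument'') rather than an argument. As written, there is a genuine gap: measure-hyperfiniteness gives you, for each probability measure separately, a conull set with approximately invariant data, and nothing you say explains how to assemble a single global Borel family $\lambda_n$ with the \emph{uniform} convergence over $\{(x,y) : \rho(x,y)<r\}$ that u-amenability demands. A direct ``patching'' of measure-theoretic F{\o}lner data compact-piece-by-compact-piece does not obviously converge to a Borel witness, and acyclicity plays no role in this step in the paper.

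The missing idea is the notion of \emph{topological amenability} of the action. The paper's route is: for a continuous action of a countable group on a Polish space, topological amenability is equivalent to measure-hyperfiniteness of $E_G^X$ together with amenability of all stabilizers (Theorem \ref{theorem:appendix}, which rests on the Connes--Feldman--Weiss theorem and \cite[Appendix A]{FKSV}). Freeness gives trivial stabilizers, so $E_{F_k}^X$ measure-hyperfinite implies the action is topologically amenable. The definition of topological amenability already builds in uniform approximate invariance \emph{on compact sets}, which is exactly the uniformity u-amenability needs; Proposition \ref{prop:topologically_amenable} converts the maps $p_n\colon X\times G\to\bbR_{\ge 0}$ into the required $\lambda_n$ on $E_G^X$ and restricts them via Lemma \ref{lemma:subgraph_ua}. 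Finally, $\sigma$-compactness lets one write the complete graphing as the increasing union of the graphs $\sch(X,\{g_0,\dots,g_n\})|_{K_n}$, each u-amenable, giving hyper-u-amenability (Corollary \ref{cor: continuous action}). Without invoking this equivalence (or reproving it), your measure-to-uniform upgrade does not go through.
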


The reader interested in other known instances of (necessarily amenable) actions of non-amenable groups giving rise to hyperfinite equivalence relations, is referred to the recent string of papers \cites{MarSab, Karp, PrSa, trees, Oyakawa, NV}, where various \emph{ad hoc} classes of actions---mainly boundary actions---are considered.

Theorem \ref{thm_main:Fk} is a corollary of a more general criterion for hyperfiniteness that provides a partial answer to the question posed in \cite[\S 6.4.C]{JKL}, asking whether countable Borel equivalence relations that are both amenable and treeable are hyperfinite. This simpler version of the question whether amenability implies hyperfiniteness is motivated by the well-known fact that hyperifinite countable Borel equivalence relations are treeable. We partially solve this problem under the assumption of a novel strong form of amenability---still implied by hyperfiniteness---which we call \emph{hyper-u-amenability} (Definition \ref{def:hyper-u-amenable}).

\begin{theoremintro} \label{thm_main:hyper_treeable}
Let $E$ be a countable Borel equivalence relation on a standard Borel space. If $E$ is treeable and hyper-u-amenable, then it is hyperfinite.
\end{theoremintro}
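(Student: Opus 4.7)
The plan is to fix a Borel treeing $T$ of $E$ and reduce the problem to Borel-selecting an end in each tree component. Write $T_x$ for the connected component of $T$ containing $x$, whose vertex set is the $E$-class $[x]_E$. It is a standard theme in the treeable setting that a Borel end-selection $x\mapsto \varepsilon(x)$, coherent along $E$-classes, is enough to conclude hyperfiniteness: pointing every edge of $T_x$ toward $\varepsilon(x)$ induces a partial orientation whose finite downward shells assemble into an exhausting increasing sequence of finite Borel subequivalence relations (cf.\ \cite{adams:trees, hjort:treeable, miller:treeable, tree-like}). So the real content of the theorem should lie in producing such an $\varepsilon$ from the hyper-u-amenability hypothesis.

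The second step is to unpack hyper-u-amenability into a genuinely tree-compatible structure. I expect the definition to supply a Borel, class-coherent sequence of finite sets $F_n(x)\subseteq [x]_E$ with vanishing relative boundary---a uniform Følner sequence indexed by points---where the extra strength (the "hyper-" and "u-" modifiers that distinguish it from plain amenability, and that are still implied by hyperfiniteness) ensures enough coherence to be compared with the treeing. In an arbitrary graphing the Følner sets produced by amenability may spread across many branches of $T$ at once, but the uniformity built into hyper-u-amenability should be tailored precisely to rule out this pathology.

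The third step is the tree-geometric rigidity. In a tree, a finite set $F$ whose edge boundary is small compared to its size is essentially a "thickened ray": removing its boundary leaves one distinguished large subtree in a single direction, and as $|\partial F|/|F|\to 0$ that direction converges to a unique end of the tree. Applying this to $F_n(x)$ in $T_x$ and taking the limit defines a candidate end $\varepsilon(x)\in\mathrm{ends}(T_x)$; Borel coherence of the $F_n$ together with the Borelness of the local combinatorics of $T$ then makes $x\mapsto\varepsilon(x)$ Borel, after which the classical end-selection implies hyperfiniteness argument finishes the proof.

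The main obstacle I anticipate is the bridge between steps two and three: forcing the Følner-type sets supplied by hyper-u-amenability to be "tree-convex up to bounded error" relative to the ambient treeing, rather than mere combinatorial Følner sets in some Schreier graph that ignores $T$. The whole weight of the argument should be the verification that the formal properties of hyper-u-amenability really do deliver this tree-compatible concentration, uniformly enough in $x$ that the resulting end-selection is Borel. Once this compatibility is established, both the extraction of $\varepsilon$ and the passage from a Borel end-selection to hyperfiniteness are relatively standard.
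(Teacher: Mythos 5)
Your strategy is blocked at its very first step, and the paper itself explains why. The reduction ``Borel end-selection $\Rightarrow$ hyperfiniteness'' is a measure-theoretic phenomenon that does \emph{not} transfer to the purely Borel setting: by \cite{miller:ends}*{Remark 5.11} there are hyperfinite acyclic Borel graphs for which it is impossible to select at most two ends from each component in a Borel way. Since hyperfiniteness implies hyper-u-amenability, no argument starting from hyper-u-amenability can produce the Borel map $x\mapsto\varepsilon(x)$ you are after. Your step three is also wrong on its own terms: in the line graph on $\bbZ$ the intervals $[-n,n]$ have relative boundary tending to $0$ yet single out no direction (removing the boundary leaves \emph{two} large components), and even when a limiting end exists, extracting it requires a Banach-limit-type operation that is not Borel. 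Separately, you have misread the hypothesis: hyper-u-amenability is not a F{\o}lner-set condition but a uniform Reiter condition --- Borel probability vectors $\lambda_{n,x}\in\ell^1([x]_E)$ with $\|\lambda_{n,x}-\lambda_{n,y}\|_1\to 0$ uniformly over pairs at bounded graph distance --- together with the requirement that some (equivalently, any) graphing be an increasing union of u-amenable graphs. Converting the $\ell^1$ data into class-coherent, tree-convex finite sets is not available for free, and your proposal also ignores the increasing-union layer entirely: even if each u-amenable piece were handled, one still faces the Union Problem, since increasing unions of hyperfinite relations are not known to be hyperfinite.

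The paper's actual route avoids ends altogether. For a u-amenable acyclic graphing of finite degree it sets $\theta_0(x,y)=\lambda_x([x]_{\cG_{x,y}})$, the $\lambda_x$-mass of the half-tree on $x$'s side of the edge $(x,y)$, and orients only those edges where this mass is decisively unbalanced (outside $[5/12,7/12]$); near-uniform invariance of $\lambda$ forces this \emph{partial} orientation to have out-degree at most $1$, while the unoriented edges form an acyclic degree-$\le 2$ graph with long components. A combinatorial criterion (Lemma \ref{lemma:quasi-oriented}) then yields $\asdim(X,\rho_\cG)\le 1$, and finiteness of the Borel asymptotic dimension both gives hyperfiniteness and --- via \cite{CJMST-D} --- resolves exactly the instance of the Union Problem needed to pass from the u-amenable pieces to the full relation. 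The partiality of the orientation is not a technical convenience but is forced by the Miller obstruction above; that is the essential idea your proposal is missing.
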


Before discussing the concept of hyper-u-amenability in more detail, we present some examples, and some corollaries of Theorem \ref{thm_main:hyper_treeable}.
The first corollary is, as hinted before, Theorem \ref{thm_main:Fk} itself, since all the equivalence relations considered therein are hyper-u-amenable (Corollary \ref{cor: continuous action}).

Our second application relates to a special case of the problem whether amenability implies hyperfiniteness, formulated in a famous question by Weiss in \cite{weiss}, asking if all actions of countable amenable groups induce hyperfinite orbit equivalence relations. Although the general case remains beyond reach, this question has been positively answered for various classes of amenable groups (\cites{weiss, SlamanSteel, JKL, GJ, SS, CJMST-D}).

As we will see in Proposition \ref{prop:group_action}, hyper-u-amenability is satisfied by orbit equivalence relations of Borel actions of countable amenable groups, hence Theorem \ref{thm_main:hyper_treeable} allows to solve affirmatively Weiss' question in the treeable case.

\begin{corollaryintro} \label{cor_intro:group}
Let $G \curvearrowright X$ be a Borel action of a countable amenable group on a standard Borel space. If $E_G^X$ is treeable, then it is hyperfinite.
\end{corollaryintro}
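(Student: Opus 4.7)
The plan is to obtain Corollary \ref{cor_intro:group} as a one-step consequence of Theorem \ref{thm_main:hyper_treeable}: since treeability of $E_G^X$ is given, it remains only to verify that $E_G^X$ is hyper-u-amenable, after which Theorem \ref{thm_main:hyper_treeable} yields hyperfiniteness. The paper announces that this hyper-u-amenability is exactly the content of Proposition \ref{prop:group_action}, so the proof of the corollary itself reduces to citing these two results in sequence.

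The substance, and the main obstacle, therefore lies in proving Proposition \ref{prop:group_action}, that every orbit equivalence relation of a Borel action of a countable amenable group is hyper-u-amenable. My approach would be to fix a Følner sequence $(F_n)_{n \in \bbN}$ in $G$ and, for each $x \in X$, use the translates $F_n \cdot x \subseteq [x]_{E_G^X}$ as almost-invariant finite subsets of the orbit. For every $g \in G$ one has
\[
\abs{F_n \cdot x \mathbin{\triangle} g F_n \cdot x} \le \abs{F_n \mathbin{\triangle} g F_n},
\]
which by the Følner property is $o(\abs{F_n})$, uniformly in $x$. The map $(n,x) \mapsto F_n \cdot x$ is Borel because each $F_n$ is a fixed finite subset of $G$ and the action is Borel, so these approximations are parametrized in a Borel manner. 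No freeness is needed: stabilizers may shrink the fibres, but amenability passes to quotients, so orbits still admit Følner-like exhaustions.

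The main technical point will be aligning the translates $F_n \cdot x$ with whatever combinatorial or graph-theoretic witnesses Definition \ref{def:hyper-u-amenable} demands. The prefix \emph{hyper} presumably indicates some reflection or iteration of the defining u-amenability property (possibly a coherent multi-scale family of approximating equivalence subrelations or graphings), so the Følner data should be organized into a nested Borel sequence of finite ``approximating pieces'' that interact correctly with the $G$-translation structure and can be refined as $n \to \infty$. Once the matching between Følner approximations and the formal definition is carried out, hyper-u-amenability follows for $E_G^X$, and the corollary is closed by an invocation of Theorem \ref{thm_main:hyper_treeable}.
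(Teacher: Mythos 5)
Your top-level reduction is exactly the paper's: Corollary \ref{cor_intro:group} follows by combining Proposition \ref{prop:group_action} with Theorem \ref{thm_main:hyper_treeable} (via Corollary \ref{cor: main result}). The gap is in your treatment of Proposition \ref{prop:group_action}, which is where all the content lies, and it is twofold. First, you never engage with what hyper-u-amenability actually requires: by Definition \ref{def:hyper-u-amenable} one must exhibit a graphing of $E_G^X$ that is an increasing union of u-amenable Borel graphs, and u-amenability (Definition \ref{def:uniform_amenability}) is a \emph{metric} notion demanding that $\sup \|\lambda_{n,x}-\lambda_{n,y}\|_1$, taken over all pairs at graph distance less than $r$, tend to $0$ for every $r$. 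Your sketch produces no graphs at all. The paper's proof takes the Schreier graphs $\cG_m \coloneqq \sch(X,\{g_0,\dots,g_m\})$ of the finitely generated subgroups $G_m = \langle g_0,\dots,g_m\rangle$; these are increasing, of finite degree, and union to the complete graphing of $E_G^X$, and for each fixed $m$ the uniform estimate over pairs at $\rho_{\cG_m}$-distance less than $r$ reduces to a F{\o}lner estimate for the finitely many words of length less than $r$ in $g_0,\dots,g_m$. This reduction to finitely many group elements per scale is what makes the uniformity in condition \eqref{item2:ua} come for free; without it your ``uniformly in $x$'' claim has nothing to attach to.

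Second, your choice of measures---the normalized indicator of $F_n\cdot x$---does not work when stabilizers are nontrivial. The bound $\abs{F_n x\,\triangle\, gF_nx}\le\abs{F_n\triangle gF_n} = o(\abs{F_n})$ must be compared against the normalizing constant $\abs{F_n x}$, which can be far smaller than $\abs{F_n}$ (the orbit map can be highly non-injective on $F_n$, with multiplicity varying from point to point), so $o(\abs{F_n})$ is not $o(\abs{F_n x})$ uniformly in $x$; the remark that ``amenability passes to quotients'' does not repair this uniformly over the space. (There is also a left/right slip: the measure attached to $y = gx$ is supported on $F_n gx$, not on $gF_n x$.) The paper avoids all of this by taking $\lambda_n(x,y) = \frac{1}{\abs{F_n}}\abs{\{g\in F_n : gx=y\}}$, the pushforward of normalized counting measure on $F_n$ under the orbit map, for which $\| \lambda_{n,x}\|_1=1$ holds exactly regardless of stabilizers and $\|\lambda_{n,x}-\lambda_{n,gx}\|_1\le\abs{F_n\triangle F_n g}/\abs{F_n}$. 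You should adopt that choice, and organize the F{\o}lner data along the filtration by the subgroups $G_m$ as above.
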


Finally, we show that hyper-u-amenability is automatic for amenable equivalence relations that are \emph{Borel bounded} in the sense of \cite{BJ} (Proposition \ref{prop: Borel bounded}), and obtain the following.

\begin{corollaryintro} \label{cor_intro:bounded}
Let $E$ be a countable Borel equivalence relation on a standard Borel space $X$. If $E$ is treeable, amenable, and Borel bounded, then it is hyperfinite.
\end{corollaryintro}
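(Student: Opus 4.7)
The plan is to derive Corollary \ref{cor_intro:bounded} as an immediate consequence of Theorem \ref{thm_main:hyper_treeable} together with Proposition \ref{prop: Borel bounded}. Since the treeability assumption is shared by the corollary and by Theorem \ref{thm_main:hyper_treeable}, the only thing I need to verify is that, for the equivalence relation $E$ at hand, amenability combined with Borel boundedness implies hyper-u-amenability.

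This verification is exactly the content of Proposition \ref{prop: Borel bounded}, announced in the paragraph immediately preceding the statement of the corollary. I would invoke this proposition on $E$, obtaining hyper-u-amenability from the hypotheses \emph{amenable} and \emph{Borel bounded}. Combining hyper-u-amenability with the treeability of $E$, Theorem \ref{thm_main:hyper_treeable} then produces hyperfiniteness, which is the desired conclusion.

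So at the level of the corollary itself there is no genuine obstacle: the argument is a two-step chain of implications, \emph{amenable} $+$ \emph{Borel bounded} $\Rightarrow$ \emph{hyper-u-amenable} (by Proposition \ref{prop: Borel bounded}), and then \emph{treeable} $+$ \emph{hyper-u-amenable} $\Rightarrow$ \emph{hyperfinite} (by Theorem \ref{thm_main:hyper_treeable}). The substantive mathematical content lives in those two supporting results. Theorem \ref{thm_main:hyper_treeable} requires exploiting an acyclic Borel graphing to convert the hyper-u-amenable structure into an exhaustion of $E$ by finite Borel subequivalence relations. Proposition \ref{prop: Borel bounded} must use a Borel bound on the growth of $E$-classes, in the sense of \cite{BJ}, to select in a Borel and class-coherent way the approximately invariant finite sets furnished by amenability, thereby upgrading plain u-amenability to its \emph{hyper} strengthening. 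Any effort spent on the corollary should therefore be invested in those two places; the corollary itself is a formal combination.
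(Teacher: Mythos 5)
Your proposal is correct and matches the paper's own proof exactly: the paper derives this corollary (as case (3) of Corollary \ref{cor:applications}) by invoking Proposition \ref{prop: Borel bounded} to get hyper-u-amenability from amenability plus Borel boundedness, and then applying Corollary \ref{cor: main result} (i.e.\ Theorem \ref{thm_main:hyper_treeable}) to conclude hyperfiniteness. No further comment is needed.
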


The concept of hyper-u-amenability is itself based on an auxiliary notion which we call \emph{u-amenability} (Definition \ref{def:uniform_amenability}). U-amenability is modeled on the notion of amenability as defined in \cite[Definitions 2.11 and 2.12]{JKL}, but it applies to equivalence relations $E$ on standard Borel spaces $X$ equipped with an \emph{extended metric} $\rho$---typically originating from a graphing of $E$ (see Section \ref{section:preliminaries})---and depends on the additional structure encoded by $\rho$. More precisely, u-amenability strengthens amenability by requiring \emph{uniform} convergence to zero---hence the prefix `u-'---for the invariance condition on the probability measures assigned to different points in the same orbit (see \S\ref{section: u-amenability} for details).\footnote{We remark that a similar property was independently considered in \cite[Definitions 1.1 and 7.1]{ElekTimar}. The main conclusion of their work is that it is equivalent to a probabilistic version of hyperfiniteness.}

The focus on Borel extended metric spaces is motivated by the breakthrough on Weiss' question obtained in \cite{CJMST-D} with the introduction and study of \emph{Borel asymptotic dimension} (Definition \ref{def: asdim}). This notion, developed in the context of Borel extended metric spaces, not only provides a criterion for hyperfiniteness, but it moreover allows to solve certain instances of the so-called \emph{Union Problem}---which asks whether increasing countable unions of hyperfinite equivalence relations are hyperfinite. Since its introduction, it has been discovered that Borel asymptotic dimension has many more applications than just the Weiss' question (see e.g. \cite{NV}), and it plays a major role in this paper as well.
 
Theorem \ref{thm_main:hyper_treeable} is in fact a consequence of the more technical Theorem \ref{thm:ua_asdim}, which applies to countable Borel equivalence relations $E$ on a standard Borel space $X$  that admit an acyclic Borel graphing $\cG \coloneqq (X, R)$ that has a finite uniform bound on the degree of all vertices. More precisely, Theorem \ref{thm:ua_asdim} asserts that if such an $E$ is u-amenable with respect to the shortest path metric $\rho_\cG$ on $X$, then the Borel asymptotic dimension of $(X, \rho_\cG)$ is finite hence, in particular, $E$ is hyperfinite by \cite[Theorem 1.7]{CJMST-D}.

Broadly speaking, to prove Theorem \ref{thm:ua_asdim} we exploit u-amenability to define a \emph{partial} Borel orientation on the acyclic graphing $\cG$, which can then be used to show $\asdim(X, \rho_\cG) < \infty$. The idea of using amenability to define a Borel orientation on a graph has precursors in the measure-theoretic setting. More specifically,  if the Borel space in question is equipped with a probability measure, then any amenable equivalence relation is hyperfinite on a {set of full measure} by \cite{CFW} and, if the relation is additionally equipped with the structure of an acyclic graph, amenability (or hyperfiniteness) is also equivalent to the so-called \emph{end selection}. This means that for each connected component (which has the structure of a connected tree) one can select either a point in it or at most two points from its Gromov boundary, and this selection is measurable (see \cite{adams:trees} and \cite[\S 3.6]{JKL}). Note however that a direct application of these methods is not possible in the setting considered in this paper: the equivalence between end selection and hyperfiniteness fails in the \emph{purely Borel} setting, as there are hyperfinite acyclic Borel graphs for which it is impossible to select at most two ends from each component in a Borel way (\cite[Remark 5.11]{miller:ends}). The Borel orientations we shall build are, in fact, only partial. Additionally, the techniques used in the aforementioned results rely on taking a Banach limit, which cannot be done in a Borel way.

One disadvantage of u-amenability is that it depends on the metric in question---in particular, an equivalence relation might have, a priori, one graphing which is u-amenable and another which is not. It turns out, however, that if an equivalence relation $E$ has a graphing that is an \emph{increasing union of graphs generating u-amenable equivalence relations}, then the same holds for any other graphing of $E$ (Proposition \ref{prop: increasing union graphings}). This can thus be regarded as an intrinsic property of the equivalence relation itself, and we refer to it as \emph{hyper-u-amenability} (Definition \ref{def:hyper-u-amenable}).

This last notion brings us back to the most general version of our criterion for hyperfiniteness, Theorem \ref{thm_main:hyper_treeable}, which follows from Theorem \ref{thm:ua_asdim} exploiting the aforementioned key fact that the finite Borel asymptotic dimension permits to solve certain instances of the Union Problem (see Proposition \ref{prop:union}).

\subsection*{Summary} The paper is organized as follows. In Section \ref{section:preliminaries} we cover the preliminaries, while in Section \ref{section: u-amenability} we introduce u-amenability and hyper-u-amenability, and provide some examples of equivalence relations with these properties.
In Section \ref{section:partial_orientations} we prove some intermediate lemmas and the main criterion needed to prove Theorem \ref{thm:ua_asdim}, whose proof is presented in Section \ref{section:asdim}, along with the proofs of Theorem \ref{thm_main:hyper_treeable} (Corollary \ref{cor: main result}), Theorem \ref{thm_main:Fk}, and Corollaries \ref{cor_intro:group} and \ref{cor_intro:bounded} (Corollary \ref{cor:applications}).

\subsection*{Acknowledgements} We thank Marcin Sabok and Anush Tserunyan for their helpful remarks on an earlier version of this work, which revealed that our methods leading to Theorem \ref{thm_main:hyper_treeable} also apply to actions of free groups with infinitely many generators and more generally of virtually free groups. We also thank Zoltán Vidnyánszky for helpful discussions and suggestions. We are finally grateful to the referee for their useful and pertinent remarks which sensibly improved the readability of the paper.

\section{Preliminaries} \label{section:preliminaries}
\subsection{Borel equivalence relations and Borel graphs} \label{ss:Borel} Let $X$ be a standard Borel space. An equivalence relation $E$ on $X$ is \emph{Borel} if $E \subseteq X^2$ is Borel.
The \emph{$E$-class} of a point $x \in X$ is denoted $[x]_E$.
A Borel equivalence relation is \emph{finite} (respectively \emph{countable}) if all its equivalence classes are finite (respectively countable), and it is \emph{hyperfinite} if it can be written as union of an increasing sequence of finite Borel equivalence relations. Finally, a  countable Borel equivalence relation $E$ over $X$ is \emph{measure-hyperfinite} if, for every probability measure $\mu$ over $X$, there exists some Borel subset $A \subseteq X$, with $\mu(A) = 1$, such that the restriction of $E$ to $A$ is hyperfinite.

Let $X$ be a standard Borel space and let $\rho$ be a \emph{Borel extended metric} on $X$, that is a Borel metric $\rho \colon X \times X \to [0, \infty]$ that can also take value $\infty$. In this case, we call $(X, \rho)$ a \emph{Borel extended metric space}. For $A \subseteq X$ and $x \in X$, we define
\begin{equation*}
\rho(x,A)\coloneqq \inf_{y \in A} \rho(x,y).
\end{equation*}
For $r > 0$ and $x \in X$, we let $B_\rho(x, r)$ denote the closed ball of radius $r$ centered in $x$ with respect to $\rho$. A Borel extended metric space is \emph{proper} if $| B_\rho(x,r) |< \infty$ for all $x \in X$ and $r > 0$. %Similarly, for a set $A \subseteq X$, we define
%\begin{equation*}
%B_\rho(A,r) \coloneqq \{ x\in X : \rho(x, A) \le r \}.
%\end{equation*}
Denote by $E_\rho$ the Borel equivalence relation of finite distance $\rho$ between points in $X$, that is
\begin{equation*}
E_\rho \coloneqq \{ (x,y) \in X^2 : \rho(x,y) < \infty \}.
\end{equation*}
An equivalence relation $E$ on a Borel extended metric space $(X, \rho)$ is \emph{$\rho$-uniformly bounded} if there exists $r > 0$ such that
every $E$-class has $\rho$-diameter smaller than $r$.

Given a subset $R \subseteq X^2$, we let $R^{-1} \coloneqq \{ (x,y) \in X^2 : (y,x) \in R \}$ and we write $xRy$ as an abbreviation for $(x,y) \in R$, and $x \neg R y$ for $(x,y) \notin R$. A \emph{Borel graph} on $X$ is a graph $\cG \coloneqq (X, R)$ where $R$ is a Borel subset of $X^2$ that is \emph{symmetric}, that is $R = R^{-1}$, and \emph{irreflexive}, meaning that $x \neg R x$ for all $x \in R$. Two vertices $x,y \in X$ are \emph{$R$-adjacent}
if $xRy$. Given a vertex $x \in X$ and an edge $(y,z) \in R$, we say that they are \emph{$R$-adjacent} if either $x=y$ or $x =z$. In both cases, we might drop `$R$-' and simply write \emph{adjacent} if no confusion arises.

If $\cG$ is a Borel graph on $X$, let $\rho_\cG$ denote the Borel extended metric on $X$ induced by the path distance, and abbreviate $E_{\rho_\cG}$ as $E_\cG$.
In this case, the equivalence class $[x]_{E_\cG}$ of $x \in X$, which we abbreviate as $[x]_\cG$, is the connected component of $x$ in $\cG$. Given $q_0, q_1 \in R$, say $q_0 = (x_0, x_1)$ and $q_1 = (y_0, y_1)$ we define
\begin{equation} \label{eq:edge_distance}
\rho_\cG(q_0, q_1) \coloneqq \min_{i,j \le 1} \rho_\cG(x_i, y_j).
\end{equation}
%We abbreviate $B_{\rho_\cG}(x,r)$ as $B_{\cG}(x, r)$, and $B_{\rho_\cG}(A,r)$ as $B_{\cG}(A, r)$.

For a vertex $x \in X$, the \emph{degree} of $x$ is the value
\begin{equation*} 
\deg_\cG(x) \coloneqq |\{ y \in X : xRy\}|.
\end{equation*}
The graph $\cG$ is locally finite if $\deg_\cG(x) < \infty$ for all $x \in X$.
Vertices of degree 1 are called \emph{leaves} and the set of all leaves in $\cG$ is denoted $\leaves(\cG)$. The \emph{degree} of $\cG$ is
\begin{equation*}
\deg(\cG) \coloneqq \sup_{x \in X} \deg_\cG(x).
\end{equation*}
%We say that a Borel graph $\cG$ has \emph{finite degree} if $\deg(\cG) < \infty$.

A \emph{(Borel) graphing} of a Borel equivalence relation $E \subseteq X^2$ is a Borel graph $\cG$ on $X$ such that $E_\cG = E$. We say that $E$ is \emph{treeable} if there exists an acyclic graphing of $E$. The \emph{complete graphing} of $E$ is the Borel graph $(X, E \setminus \Delta(X))$, where $\Delta(X) = \{(x, x) \in X^2 : x \in X\}$ is the diagonal of $X^2$.

Finally, for a subset $Y \subseteq X$ and $R \subseteq X^2$, we write $R|_Y$ for the restriction $R \cap Y^2$, and for a graph $\cG \coloneqq (X, R)$, we write $\cG|_Y$ for the graph $(Y, R|_Y)$.

\subsection{Borel orientations and out-degree}

A \emph{Borel orientation} of $\cG= (X, R)$ is a Borel subset $\vv{R} \subseteq R$ such that $\vv{R} \cap \vv{R}^{-1} = \emptyset$ and $\vv{R} \cup \vv{R}^{-1} = R$. In other words, for every $xRy$, exactly one among
$x\vv{R}y$ and $y\vv{R}x$ holds. The \emph{out-degree} of a vertex $x \in X$ is the value
\begin{equation*} 
\degout(x) \coloneqq \abs{\left\{y \in X : x\vv{R} y\right\}}.
\end{equation*}
We also define the \emph{out-degree} of the pair $(X, \vv{R})$ as
\begin{equation*}
\Degout(X,\vv{R}) \coloneqq \sup_{x \in X}\degout(x). 
\end{equation*}

\begin{remark} \label{remark:degree_maps}
Let $\cG \coloneqq (X, R)$ be a Borel graph. Note that, in case $\deg(\cG) \in \bbN \cup \{\aleph_0\}$, a standard application of the Lusin--Novikov uniformization theorem (\cite[Theorem 18.10]{Kechris:CDST}) shows that the maps
\begin{equation*}
\deg_\cG \colon X \to \bbN \cup \{\aleph_0\}, \quad \degout \colon X \to \bbN \cup \{\aleph_0\}
\end{equation*}
are Borel. In particular, $\leaves(\cG) = (\deg_\cG)^{-1}(1)$ is a Borel subset of $X$.
Note moreover that, given a constant $C > 0$, the set
\begin{equation*}
Y \coloneqq \{ x \in X : |[x]_\cG | \le C \}
\end{equation*}
is Borel.
Indeed, if $\cG_0$ is the complete graphing of $E_\cG$, then $Y = (\deg_{\cG_0})^{-1}([0, C)])$.
\end{remark}

\subsection{Borel asymptotic dimension}
We recall the definition of Borel asymptotic dimension from \cite{CJMST-D}.

\begin{definition}[{\cite[Definition 3.2]{CJMST-D}}]\label{def: asdim}
  Let $(X,\rho)$ be a Borel extended metric space such that $E_\rho$
    is countable. The \emph{Borel
    asymptotic dimension} of $(X, \rho$), denoted $\asdim(X,\rho)$, is the smallest $d \in \bbN$
    such that, for every $r > 0$, there exists a 
    $\rho$-uniformly bounded Borel equivalence relation $E$ on $X$ with the property that, for
    every $x \in X$, the ball $B_\rho(x,r)$ intersects at most $d+1$ classes of $E$,
    and it is $\infty$ if no such $d$ exists.
\end{definition}

Finite Borel asymptotic dimension is one of the main tools for accessing hyperfiniteness in this paper. Finite Borel asymptotic dimension provides, moreover, a tame framework where certain instances of the Union Problem can be solved affirmatively. This is synthesized in a precise manner in \cite[Theorem 7.3]{CJMST-D}. The proposition below isolates a specific instance of this result, in the context of Borel graphs.

\begin{proposition}\label{prop:union}
Let $\cG_n \coloneqq (X, R_n)$ be locally finite Borel graphs and $R_n \subseteq R_{n+1}$, for all $n \in \bbN$. Suppose that $\asdim (X, \rho_{\cG_n}) < \infty$ for all $n \in \bbN$. Then $E \coloneqq \bigcup_{n \in \bbN} E_{\cG_n}$ is hyperfinite.
\end{proposition}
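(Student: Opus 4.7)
The statement is advertised as a specific instance of \cite[Theorem 7.3]{CJMST-D}, so my plan is to verify that the hypotheses of that general theorem hold in the present setup and to recall how the argument goes. Note first that, for each fixed $n$, the finite Borel asymptotic dimension of $(X, \rho_{\cG_n})$ together with properness already implies that $E_{\cG_n}$ is hyperfinite by \cite[Theorem 1.7]{CJMST-D}. Thus $E = \bigcup_n E_{\cG_n}$ is \emph{a priori} only an increasing union of hyperfinite Borel equivalence relations---precisely the Union Problem setup---and the content of the proposition is that finite $\asdim$ at every level provides the extra structure needed to conclude hyperfiniteness of the union.

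To carry out the proof, I would, for each $n$, extract from $\asdim(X, \rho_{\cG_n}) = d_n < \infty$ a family of $\rho_{\cG_n}$-uniformly bounded Borel equivalence relations $\{F_{n,r}\}_{r > 0}$ such that every ball $B_{\rho_{\cG_n}}(x, r)$ meets at most $d_n + 1$ classes of $F_{n, r}$. Properness of $(X, \rho_{\cG_n})$ ensures that each $F_{n, r}$ has finite classes. The goal is then to produce an increasing sequence $H_1 \subseteq H_2 \subseteq \cdots$ of finite Borel equivalence relations with $\bigcup_m H_m = E$, which I would build inductively: assuming $H_m$ has been defined with $\rho_{\cG_m}$-uniformly bounded classes, choose $r_{m+1}$ large enough that each $H_m$-class is contained in some $\rho_{\cG_{m+1}}$-ball of radius $r_{m+1}$, and let $H_{m+1}$ be the equivalence relation generated by $H_m \cup F_{m+1, r_{m+1}}$. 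The dimension bound from $\asdim(X, \rho_{\cG_{m+1}}) \le d_{m+1}$ controls how many $F_{m+1, r_{m+1}}$-classes such a ball can meet, so each $H_{m+1}$-class is swallowed into a finite union of $F_{m+1, r_{m+1}}$-classes; it is therefore $\rho_{\cG_{m+1}}$-uniformly bounded and finite by properness.

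The main obstacle is to balance monotonicity against exhaustion: every step must preserve finiteness of the classes while eventually covering each edge of every $\cG_n$. The bound $d_n + 1$ is exactly what makes the induction close, because merging classes along a $d_n$-dimensional cover inflates diameters only by a factor controlled by the $F_{m, r_m}$-diameter bound, and properness turns this into a finite cardinality bound. Verifying that the inductive construction can be carried out in a Borel way is routine given that the $F_{n,r}$ are Borel and that $\rho_{\cG_n}$-balls and their class counts are Borel objects; the monotonicity $R_n \subseteq R_{n+1}$ guarantees that $\rho_{\cG_{n+1}} \le \rho_{\cG_n}$, which is what permits the $H_m$-classes (bounded in $\rho_{\cG_m}$) to be contained in $\rho_{\cG_{m+1}}$-balls at the next step. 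The resulting sequence $(H_m)_m$ witnesses the hyperfiniteness of $E$.
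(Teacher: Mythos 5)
Your reduction is exactly the paper's proof: the paper merely observes that $R_n \subseteq R_{n+1}$ forces $\rho_{\cG_{n+1}} \le \rho_{\cG_n}$, so that one has a decreasing sequence of proper Borel extended metrics with finite Borel asymptotic dimension, and then invokes \cite[Theorem 7.3]{CJMST-D} as a black box. Your first paragraph and your closing remark about monotonicity are correct and are all that the paper itself supplies.

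The inductive construction you sketch as a recollection of the proof of that theorem, however, has a genuine gap at its central step. When you define $H_{m+1}$ as the equivalence relation \emph{generated} by $H_m \cup F_{m+1, r_{m+1}}$, its classes are the connected components of the bipartite intersection graph between $H_m$-classes and $F_{m+1,r_{m+1}}$-classes. The dimension bound does give that each $H_m$-class, being contained in a $\rho_{\cG_{m+1}}$-ball of radius $r_{m+1}$, meets at most $d_{m+1}+1$ classes of $F_{m+1,r_{m+1}}$; but a single $F_{m+1,r_{m+1}}$-class may meet an unbounded number of $H_m$-classes, each of which links to further $F_{m+1,r_{m+1}}$-classes, and nothing in your argument prevents these chains from being infinite. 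So the claim that each $H_{m+1}$-class is ``swallowed into a finite union of $F_{m+1,r_{m+1}}$-classes'' does not follow, and in general it is false for the generated join --- this chaining phenomenon is precisely the obstruction in the Union Problem, and if the naive join sufficed one would not need finite asymptotic dimension at every scale. The actual content of \cite[Theorem 7.3]{CJMST-D} is to avoid the join: finite Borel asymptotic dimension is used to produce, at scale $r_{m+1}$, a uniformly bounded Borel equivalence relation that \emph{already contains} $H_m$ by construction, so that no generation step is ever performed and finiteness of classes follows from uniform boundedness plus properness. Either cite the theorem as the paper does, or reproduce that nesting argument; as written, your sketch does not establish that the $H_{m+1}$-classes are finite.
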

\begin{proof}
Since $R_n \subseteq R_{n+1}$, we have
\[
\rho_{\cG_{n+1}} \le \rho_{\cG_n}.
\]
The conclusion then directly follows from \cite[Theorem 7.3]{CJMST-D}.
\end{proof}

\subsection{Schreier graphs and orbit equivalence relations}
Let $G$ be a group, let $X$ be a standard Borel space, and let $G \curvearrowright X$ be an action. The \emph{orbit equivalence relation} $E_G^X$ on $X$ is defined as
\begin{equation*}
x E_G^X y \text{ if and only if there is }g \in G \text{ such that }gx = y.
\end{equation*}
The relation $E_G^X$ is countable if $G$ is countable, and it is Borel if $G \curvearrowright X$ is a {Borel action}. We recall that if $G$ is a \emph{virtually free group}---meaning that $G$ has a free subgroup of finite index---and the action $G \curvearrowright X$ is free, then $E_G^X$ is treeable (see e.g. \cite[Theorem 55 on p. 240]{cohen:cgt}).

If $S$ is a subset of $G$, the induced \emph{Schreier graph} $ \sch(X, S)$ on $X$ is defined by putting an edge between any distinct $x,y \in X$ such that there is $s \in S \cup S^{-1} \setminus \{ e \}$ with $sx = y$. If $S$ is a generating set for $G$ then $E_{\sch(X,S)} = E_G^X$.

\section{U-amenability and hyper-u-amenability}\label{section: u-amenability}
In this section we introduce \emph{u-amenability} and \emph{hyper-u-amenability} and provide some examples of equivalence relations with these properties.

\begin{definition} \label{def:uniform_amenability}
Let $(X,\rho)$ be a Borel extended metric space and let $E$ be a countable Borel equivalence relation on $X$.
We say that $E$ is \emph{u-amenable with respect to $\rho$} if  $E \subseteq E_\rho$ and if there are Borel maps
\begin{equation*}
\lambda_n \colon E \to \mathbb{R}_{\ge 0}, \quad n \in \bbN,
\end{equation*}
such that, with the abbreviation $\lambda_{n,x}(\cdot) \coloneqq \lambda_n(x, \cdot)$,
\begin{enumerate}
\item $\lambda_{n,x} \in \ell^1([x]_{E})$ and $\| \lambda_{n,x} \|_1 = 1$ for every $n \in \bbN$ and $x \in X$,
\item \label{item2:ua} $\sup_{\{ (x,y) \in E  : \rho(x,y) < r \}} \| \lambda_{n,x} - \lambda_{n,y} \|_1 \to 0$ as $n \to \infty$, for every $r > 0$.
\end{enumerate}

A Borel graph $\cG \coloneqq (X,R)$ is \emph{u-amenable} if $E_\cG$ is u-amenable with respect to $\rho_\cG$. Note that, in this case, \eqref{item2:ua} is equivalent to
\begin{equation*}
\sup_{ (x,y) \in R } \| \lambda_{n,x} - \lambda_{n,y} \|_1 \to 0, \quad n \to \infty.
\end{equation*}
\end{definition}

The prefix `u-' stands for \emph{uniform}, and emphasizes the difference between our definition and the more well-known \emph{1-amenability}---or simply \emph{amenability}---for a countable Borel equivalence relation $E$ introduced in \cite[Definition 2.12]{JKL}, which requires the existence of Borel functions $\lambda_n \colon E \to \mathbb{R}_{\ge 0}$ as in Definition \ref{def:uniform_amenability}, but where condition \eqref{item2:ua} above is replaced by a weaker and, so to speak, \emph{pointwise} condition. We recall it below.
\begin{definition}[{\cite[Definitions 2.11 and 2.12]{JKL}}] \label{def:amenability}
Let $X$ be a standard Borel space and let $E$ be a countable Borel equivalence relation on $X$.
We say that $E$ is \emph{1-amenable}, or simply \emph{amenable}, if there are Borel maps
\begin{equation*}
\lambda_n \colon E \to \mathbb{R}_{\ge 0}, \quad n \in \bbN,
\end{equation*}
such that, with the convention $\lambda_{n,x}(\cdot) \coloneqq \lambda_n(x, \cdot)$,
\begin{enumerate}
\item $\lambda_{n,x} \in \ell^1([x]_{E})$ and $\| \lambda_{n,x} \|_1 = 1$ for every $n \in \bbN$ and $x \in X$,
\item $\| \lambda_{n,x} - \lambda_{n,y} \|_1 \to 0$, as $n \to \infty$, whenever $xEy$.
\end{enumerate}
\end{definition}

The next lemma, which we isolate for later use, implies that u-amenability is preserved when taking sub-equivalence relations or subgraphs.
\begin{lemma}\label{lemma:subgraph_ua}
Let $X$ be a standard Borel space and let $E$ be a countable Borel equivalence relation on $X$. Suppose that $X_0 \subseteq X$ is a Borel subset, that $\rho_0$ is an extended Borel metric on $X_0$, and let $E_0$  be a countable Borel equivalence relation on $X$ such that $E_0 \subseteq E \cap E_{\rho_0}$.
Suppose finally that there are Borel maps
\begin{equation*}
\lambda_n \colon E \to \bbR_{\ge 0}, \quad n \in \bbN,
\end{equation*}
such that $\lambda_{n,x} \in \ell^1([x]_{E})$, $\| \lambda_{n,x} \|_1 = 1$ and, for every $r > 0$,
\begin{equation}\label{eq:ua_Y}
\sup_{\{ (x,y) \in E_0  : \rho_0(x,y) < r \}} \| \lambda_{n,x} - \lambda_{n,y} \|_1 \to 0, \quad n \to \infty.
\end{equation}
Then $E_0$ is u-amenable with respect to $\rho_0$. In particular, if $\cG \coloneqq (X, R)$ is a u-amenable Borel graph and $\cG_0 \coloneqq (X_0, R_0)$ is a Borel graph with $R_0 \subseteq R$, then $\cG_0$ is also u-amenable.
\end{lemma}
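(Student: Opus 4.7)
The approach is to construct u-amenability witnesses for $E_1$ by restricting the given measures $\lambda_{n,x}$---which are supported on the potentially larger class $[x]_{E_0}$---to $[x]_{E_1}$ and renormalizing. Concretely, I set $c_{n,x} := \sum_{z \in [x]_{E_1}} \lambda_{n,x}(z)$ and define $\mu_{n,x} := c_{n,x}^{-1}\,\lambda_{n,x}|_{[x]_{E_1}}$ when $c_{n,x} > 0$, and $\mu_{n,x} := \delta_x$ otherwise. By construction, $\mu_{n,x} \in \ell^1([x]_{E_1})$ with $\|\mu_{n,x}\|_1 = 1$, and Borel measurability of the map $(x,z) \mapsto \mu_{n,x}(z)$ on $E_1$ follows from the Borel measurability of $E_1$ and $\lambda_n$, together with a Luzin--Novikov enumeration of the countable sets $[x]_{E_1}$ which makes $c_{n,x}$ a Borel function of $x$.

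The core step is to verify condition \eqref{item2:ua} for these $\mu_n$. For $(x,y) \in E_1$ with $\rho_1(x,y) < r$, the identity $[x]_{E_1} = [y]_{E_1}$ ensures that both $\mu_{n,x}$ and $\mu_{n,y}$ are supported on the same set, and I would use the decomposition
\[
\mu_{n,x} - \mu_{n,y} \;=\; c_{n,x}^{-1}\,(\lambda_{n,x} - \lambda_{n,y})|_{[x]_{E_1}} \;+\; \lambda_{n,y}|_{[x]_{E_1}}\,(c_{n,x}^{-1} - c_{n,y}^{-1}),
\]
combined with the elementary estimate $|c_{n,x} - c_{n,y}| \le \|\lambda_{n,x} - \lambda_{n,y}\|_1$, to produce a bound of the form $\|\mu_{n,x} - \mu_{n,y}\|_1 \le 2\,\|\lambda_{n,x} - \lambda_{n,y}\|_1 / c_{n,x}$. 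The hypothesis \eqref{eq:ua_Y} then delivers the required uniform-in-$r$ convergence.

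The ``in particular'' statement is then a direct consequence of the first half. U-amenability of $\cG_0$ supplies Borel maps $\lambda_n \colon E_{\cG_0} \to \bbR_{\ge 0}$ witnessing Definition \ref{def:uniform_amenability} with respect to $\rho_{\cG_0}$, and $R_1 \subseteq R_0$ forces $\rho_{\cG_0}(x,y) \le \rho_{\cG_1}(x,y)$ for all $x, y \in X_1$. Consequently, any pair $(x,y) \in E_{\cG_1}$ with $\rho_{\cG_1}(x,y) < r$ lies in $E_{\cG_0}$ at $\rho_{\cG_0}$-distance less than $r$, so the hypothesis of the main claim holds for $(E_0, E_1, \rho_1) = (E_{\cG_0}, E_{\cG_1}, \rho_{\cG_1})$ with the same $\lambda_n$, and the u-amenability of $\cG_1$ follows.

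The delicate point I anticipate is handling the normalization: the estimate $2 \|\lambda_{n,x} - \lambda_{n,y}\|_1 / c_{n,x}$ deteriorates when $c_{n,x}$ becomes small, and the degenerate convention $\mu_{n,x} := \delta_x$ at $c_{n,x} = 0$ must be reconciled with uniform $E_1$-invariance. The argument should exploit the fact that $c_{n,x}$ is itself uniformly $E_1$-almost-invariant---via the same inequality $|c_{n,x} - c_{n,y}| \le \|\lambda_{n,x} - \lambda_{n,y}\|_1$---to ensure the convention does not spoil the uniform convergence.
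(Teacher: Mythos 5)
Your restrict-and-renormalize strategy has a genuine gap: there is no lower bound on the mass $c_{n,x} = \lambda_{n,x}([x]_{E_1})$, and in the situations this lemma is actually used for, that mass typically tends to $0$. The hypotheses only control $\|\lambda_{n,x}-\lambda_{n,y}\|_1$; they say nothing about how much of $\lambda_{n,x}$ sits inside the (possibly much smaller) class $[x]_{E_1}$. Concretely, suppose $[x]_{E_1}=\{x,y\}$ and set $\lambda_{n,x}(x)=\epsilon_n$, $\lambda_{n,x}(z)=1-\epsilon_n$ for some fixed $z\in[x]_{E_0}\setminus[x]_{E_1}$, and symmetrically $\lambda_{n,y}(y)=\epsilon_n$, $\lambda_{n,y}(z)=1-\epsilon_n$, with $\epsilon_n\to 0$. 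Then $\|\lambda_{n,x}-\lambda_{n,y}\|_1=2\epsilon_n\to 0$, so the hypothesis \eqref{eq:ua_Y} can hold, yet your $\mu_{n,x}=\delta_x$ and $\mu_{n,y}=\delta_y$ give $\|\mu_{n,x}-\mu_{n,y}\|_1=2$ for all $n$. The bound $\|\mu_{n,x}-\mu_{n,y}\|_1\le 2\|\lambda_{n,x}-\lambda_{n,y}\|_1/c_{n,x}$ is therefore not salvageable: almost-invariance of $c_{n,x}$ along the class does not prevent $c_{n,x}$ from being uniformly small (or zero) on the whole class, which is exactly the regime where renormalization destroys the invariance. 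This is why the typical applications (e.g.\ Følner measures for the ambient relation $E_0=E_G^X$, restricted to a finite subgraph generating $E_1$) would break under your construction.

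The paper avoids this by \emph{transporting} rather than discarding the mass outside $[x]_{E_1}$: using Lusin--Novikov functions $F_k$ one defines a Borel map $P(x,\cdot)\colon [x]_{E_0}\to [x]_{E_1}$, $P(x,z)=F_{N(x,z)}(z)$ with $N(x,z)=\min\{k : x\,E_1\,F_k(z)\}$, and sets $\lambda'_{n,x}$ to be the pushforward of $\lambda_{n,x}$ under $P(x,\cdot)$. This keeps $\|\lambda'_{n,x}\|_1=1$ with no normalization, and since $P(x,\cdot)=P(y,\cdot)$ whenever $x\,E_1\,y$, the pushforward is a contraction in $\ell^1$: $\|\lambda'_{n,x}-\lambda'_{n,y}\|_1\le\|\lambda_{n,x}-\lambda_{n,y}\|_1$, so \eqref{eq:ua_Y} transfers directly. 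Your treatment of the ``in particular'' clause (monotonicity $\rho_{\cG_0}\le\rho_{\cG_1}$ from $R_1\subseteq R_0$) is correct and matches the paper, but the core construction needs to be replaced along these lines.
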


\begin{proof}
Use the Lusin--Novikov uniformization theorem (\cite[Theorem 18.10]{Kechris:CDST}) to find Borel functions $F_k \colon X \to X$, for $k\in \bbN$, such that $E = \{(x,y) \in X^2 : \exists k \, (y = F_k(x)) \}$. Consider the following two Borel functions
\begin{equation*}
\begin{aligned}[c]
N \colon E & \to \bbN  \\
(x,y) &\mapsto \min \{k : x E_0 F_k(y) \},
\end{aligned}
\qquad
\begin{aligned}[c]
P \colon E &\to X \\
(x,y) &\mapsto F_{N(x,y)}(y).
\end{aligned}
\end{equation*}
%Both functions are Borel: for every $k \in \bbN$, the function $H_k\colon E_\cG \to \bbN \cup \{ \infty \}$ that is constantly $k$ on the preimage of $E_\cG$ via the Borel map $(x,y) \mapsto (x, F_k(y))$ and $\infty$ elsewhere is Borel, and since $N= \inf_{k \in \bbN} H_k$, we have that $N$ is Borel as well.
For every $n \in \bbN$, define finally the Borel functions
\begin{align*}
\lambda'_n \colon E_0 &\to \bbR_{\ge 0} \\
(x,y) & \mapsto   \sum_{\substack{z \in [x]_{E}, \\ P(x, z) = y }}\lambda_{n,x}(z). 
\end{align*}

The sequence $(\lambda'_n)_{n \in \bbN}$ witnesses that $E_0$ is u-amenable with respect to $\rho_0$.
Indeed, given $n\in \bbN$ and $x \in Y$, we have
\begin{equation*}
\| \lambda'_{n,x} \|_1 = \sum_{y \in [x]_{E_0}} \lambda'_{n,x}(y) = \sum_{y \in [x]_{E_0}}  \sum_{\substack{z \in [x]_{E}, \\ P(x, z) = y }}\lambda_n(x, z) = \sum_{y \in [x]_{E}} \lambda_{n,x}(y) = \| \lambda_{n,x} \|_1 = 1.
\end{equation*}

Finally, to verify that the sequence $(\lambda'_n)_{n \in \bbN}$ satisfies item \eqref{item2:ua} of Definition \ref{def:uniform_amenability}, note that if $xE_0y$ and $w \in [x]_{E}$ then $N(x, w) = N(y,w)$ by definition of $N$, hence $P(x,w) = P(y, w)$. This, in particular, implies that
\begin{equation} \label{eq:lambda_n}
 \sum_{\substack{w \in [x]_{{E}}, \\ P(x,w) = z}} \lambda_{n,x}(w) - \sum_{\substack{\bar w \in [y]_{{E}}, \\ P(y,\bar w) = z}} \lambda_{n,y} (\bar w) =
  \sum_{\substack{w \in [x]_{{E}}, \\ P(x,w) = z}} \lambda_{n,x}(w) - \lambda_{n,y}(w),
  \end{equation}
which in turn gives
\begin{align*}
\| \lambda'_{n,x} - \lambda'_{n,y} \|_1 & \stackrel{\mathclap{\eqref{eq:lambda_n}}}{\le} \sum_{z \in [x]_{E_0}}  \sum_{\substack{w \in [x]_{E}, \\ P(x,w) = z}} | \lambda_{n,x}(w) - \lambda_{n,y}(w)| \\
&= \sum_{z \in [x]_{E_0}} | \lambda_{n,x}(z) - \lambda_{n,y}(z)| \\
&= \| \lambda_{n,x} - \lambda_{n,y} \|_1
\end{align*}
Item \eqref{item2:ua} of Definition \ref{def:uniform_amenability} for $(\lambda'_n)_{n \in \bbN}$ hence follows by \eqref{eq:ua_Y}.
\end{proof}
%Note that an action of a countable group $G \curvearrowright X$ on a $\sigma$-compact Polish space is topologically amenable if and only if it is amenable in the sense of \cite[Definition 2.12]{DJK} (see \cite[Theorem A.3.1]{FKSV}). In particular, Proposition \ref{prop:topologically_amenable} also applies to continuous actions that are amenable in the following sense.
\begin{definition} \label{def:hyper-u-amenable}
Let $X$ be a standard Borel space. A countable Borel equivalence relation $E$ on $X$ is \emph{hyper-u-amenable} if the complete graphing of $E$ is an increasing union of Borel u-amenable graphs on $X$, that is, if that there are u-amenable Borel graphs $\cG_n \coloneqq (X, R_n)$, for $n \in \bbN$, such that $R_n \subseteq R_{n+1}$ and $E = \bigcup_{n \in \bbN} R_n$.
\end{definition}

We show in the next proposition that, if $E$ admits a graphing which is an increasing union of u-amenable graphs, then all graphings of $E$, including the complete one, have this property. We moreover prove that the u-amenable graphs can always be assumed to have finite degree.

\begin{proposition}\label{prop: increasing union graphings}
Let $X$ be a standard Borel space and let $E \subseteq X^2$ be a countable Borel equivalence relation. Suppose that $E$ has a Borel graphing $\cH$ which is increasing union of u-amenable Borel graphs. If $\cG$ is any other graphing of $E$, then $\cG$ is an increasing union of u-amenable Borel graphs of finite degree. In particular, $E$ is hyper-u-amenable.
\end{proposition}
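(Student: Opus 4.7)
The plan is to combine two ingredients: a decomposition of the edge set $R$ of $\cG$ into countably many Borel matchings (to control the degree), and a scale-dependent truncation governed by the hypothesized u-amenable graphs $\cG'_k = (X, R'_k)$ whose increasing union is a graphing of $E$ (to control u-amenability). I would first apply the Feldman--Moore theorem to $E$ to obtain Borel involutions $\sigma_j$ with $E = \bigcup_j \mathrm{graph}(\sigma_j)$, and then set $M_j \coloneqq R \cap \mathrm{graph}(\sigma_j)$, giving a decomposition $R = \bigcup_j M_j$ in which each $M_j$ is a Borel matching. In parallel, for each $k$ I would define the ``thickening''
\[
R_k^\# \coloneqq \{(x,y) \in X^2 : x \neq y,\ \rho_{\cG'_k}(x,y) \le k\}, \qquad S_k \coloneqq R \cap R_k^\#.
\]

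The first nontrivial step is to show that $(X, R_k^\#)$ is u-amenable. It has the same connected components as $\cG'_k$, and because every edge of $R_k^\#$ corresponds to a $\cG'_k$-path of length $\le k$, the path metrics satisfy $\rho_{\cG'_k} \le k \cdot \rho_{(X, R_k^\#)}$. Feeding the $\lambda_n$'s that witness u-amenability of $\cG'_k$ into the estimate
\[
\sup_{\rho_{(X, R_k^\#)}(x,y) < r} \|\lambda_{n,x} - \lambda_{n,y}\|_1 \le \sup_{\rho_{\cG'_k}(x,y) < rk} \|\lambda_{n,x} - \lambda_{n,y}\|_1 \to 0
\]
then transports the uniformity condition from the finer to the coarser metric. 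I expect this to be the main technical obstacle, since u-amenability depends on the specific metric and is genuinely sensitive to passing to a coarser one; the key point is the linear distortion factor $k$, which lets a $\rho_{(X, R_k^\#)}$-ball of radius $r$ be absorbed into a $\rho_{\cG'_k}$-ball of radius $rk$.

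Finally, I would set $R_N \coloneqq \bigcup_{j \le N}\bigl(M_j \cap S_N\bigr)$. Monotonicity $R'_k \subseteq R'_{k+1}$ yields $\rho_{\cG'_{k+1}} \le \rho_{\cG'_k}$, so $S_N \subseteq S_{N+1}$ and therefore $R_N \subseteq R_{N+1}$. For exhaustion, given $(x,y) \in R$ with $(x,y) \in M_j$, fix the smallest $k_0$ with $(x,y) \in E_{\cG'_{k_0}}$ and let $\ell_0 \coloneqq \rho_{\cG'_{k_0}}(x,y) < \infty$; then for every $N \ge \max(j, k_0, \ell_0)$ one has $(x,y) \in M_j \cap S_N \subseteq R_N$. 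The graph $(X, R_N)$ is a union of $N$ Borel matchings, hence has degree at most $N$, and since $R_N \subseteq R_N^\#$ it is an edge-subgraph of the u-amenable graph $(X, R_N^\#)$, so Lemma~\ref{lemma:subgraph_ua} delivers the u-amenability of $(X, R_N)$. This exhibits $\cG$ as an increasing union of u-amenable Borel graphs of finite degree, proving the proposition.
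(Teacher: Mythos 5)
Your argument is correct, and its core coincides with the paper's: the truncation $S_N=\{(x,y)\in R:\rho_{\cG'_N}(x,y)\le N\}$ is exactly the paper's definition of the approximating edge sets, and your transport of uniform invariance via the distortion bound $\rho_{\cG'_k}\le k\,\rho_{(X,R_k^{\#})}$ is the same metric-comparison step the paper performs (there phrased as $\rho_{\cG_n}\ge \tfrac1n\rho_{\cH_n}$ followed by Lemma~\ref{lemma:subgraph_ua}). Where you genuinely diverge is in how finite degree is secured. The paper first replaces the witnessing graphs $\cH_n$ by $\cH_n$ intersected with Schreier graphs of finitely many Feldman--Moore generators, so that each $\cH_n$ has finite degree; the finite degree of the truncated graphs then falls out of the metric comparison, since every $\cG_n$-neighbour of $x$ lies in the finite $\cH_n$-ball of radius $n$. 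You instead leave the witnessing graphs untouched and control the degree of the \emph{output} graphs directly, by decomposing $R$ into Borel matchings $M_j$ coming from Feldman--Moore involutions and keeping only $j\le N$ at stage $N$, which caps the degree at $N+1$ regardless of the degrees of the $\cG'_k$. Both routes invoke Feldman--Moore once; yours dispenses with the preliminary reduction of the witnessing sequence at the cost of introducing the auxiliary thickened graphs $(X,R_k^{\#})$, whose u-amenability you correctly verify before passing to the subgraph $(X,R_N)$ via Lemma~\ref{lemma:subgraph_ua}. All the verifications (monotonicity, exhaustion, symmetry of the $M_j$, and the identification $E_{(X,R_k^{\#})}=E_{\cG'_k}$ needed for condition (1) of u-amenability) go through as you indicate.
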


\begin{proof}
Let $\cH \coloneqq (X, S)$ be a graphing witnessing hyper-u-amenability of $E$. There exist therefore, for $n \in \bbN$, u-amenable Borel graphs  $\cH_n \coloneqq (X, S_n)$ such that
\[
S = \bigcup_{n=1}^\infty S_n, \quad S_n \subseteq S_{n+1}, \quad n \in \bbN.
\]

Without loss of generality, we may assume that each $\cH_n$ has finite degree. To see this, note that by \cite{FM} there is a countable group $G$, enumerated as $\{ g_n \} _{ n\in \bbN}$, and a Borel action $G \curvearrowright X$ such that $E = E_G^X$. Denote by $T_n$ the edge relation of the Schreier graph $\sch(X, \{ g_0, \dots, g_{n} \})$. Given $n \in \bbN$, the graph $\cH'_n \coloneqq (X, S_n \cap T_n)$ has finite degree since $\sch(X, \{ g_0, \dots, g_{n} \})$ does, it is u-amenable by Lemma \ref{lemma:subgraph_ua} since $\cH_n$ is, and we moreover have both $S_n \cap T_n \subseteq S_{n+1} \cap T_{n+1}$ and $S = \bigcup_{n \in \bbN} (S_n \cap T_n)$.

Suppose next that $\cG \coloneqq (X, R)$ is another graphing of $E$, define 
\[
R_n  \coloneqq \{(x, y) \in R : \rho_{\cH_n}(x, y) \le n\}, \quad n \in \bbN,
\]
and set $\cG_n \coloneqq (X, R_n)$. We show that $\cG$ is an increasing union of the $\cG_n$'s, and that these are u-amenable and with finite degree.

The inclusion $R_n \subseteq R_{n+1}$ follows since
\begin{equation}\label{eq: distance G_n}
\rho_{\cH_{n+1}} \le \rho_{\cH_n}.
\end{equation}
Since $E = E_\cG = E_\cH$, for every edge $xRy$ there is a sufficiently large $n$ such that $xE_{\cH_n}y$, that is, $\rho_{\cH_n}(x, y) < \infty$. By \eqref{eq: distance G_n}, it then follows that $x R_m y$ with $m = \max\{ n, \rho_{\cH_n}(x, y) \}$, and thus $R = \bigcup_{n \in \bbN} R_n$.
Moreover, the inequality
\begin{equation}\label{eq: distance R_n}
\rho_{\cG_n} \ge \frac{1}{n}\rho_{\cH_n}.
\end{equation}
implies $\deg(\cG_n) \le \deg(\cH_n)^n < \infty$. Finally, \eqref{eq: distance R_n} and Lemma \ref{lemma:subgraph_ua}, applied to  $E_{\cH_n}$ and $E_{\cG_n}$,
imply that the graphs $\cG_n$ are u-amenable.
\end{proof}

\subsection{Examples} 
\subsubsection{Topologically amenable actions}
The first class of u-amenable equivalence relations that we consider comes from continuous, \emph{topologically amenable} actions of finitely generated groups on compact spaces, of which we briefly recall the definition.
\begin{definition}
Let $G$ be a countable group and $X$ a Polish space. A continuous action $G \curvearrowright X$ is \emph{topologically amenable} if for every compact subset $K \subseteq X$ there are continuous maps
\begin{equation*}
p_n \colon X \times G \to \bbR_{\ge 0}, \quad n \in \bbN,
\end{equation*}
such that, with the convention $p_{n,x}(\cdot) \coloneqq p_n(x, \cdot)$,
\begin{enumerate}
\item $p_{n,x} \in \ell^1(G)$ and $\| p_{n,x} \|_1 = 1$ for every $n \in \bbN$ and $x\in X$,
\item $\sup_{x \in K} \| g\cdot p_{n,x} - p_{n,gx} \|_1 \to 0$ as $n \to \infty$, for every $g \in G$, and where $g \cdot p_{n,x}(h) = p_{n,x}(g^{-1}h)$.
\end{enumerate}
\end{definition}

\begin{proposition} \label{prop:topologically_amenable}
Let $G$ be a countable group and $X$ a Polish space. Suppose that $G \curvearrowright X$ is a continuous, topologically amenable action, let $K \subseteq X$ be a compact set, and let $S \subseteq G$ be finite. Then the graph $\sch(X, S)|_K$ is u-amenable.
\end{proposition}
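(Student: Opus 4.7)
\emph{Proof plan.} The strategy is to push the nearly-invariant probability measures from topological amenability forward from $G$ to the orbits in $X$, thereby establishing an intermediate u-amenability condition for the full orbit equivalence relation $E_G^X$, and then to transfer this to the restricted Schreier graph $\sch(X,S)|_K$ via Lemma~\ref{lemma:subgraph_ua}. Concretely, I would apply topological amenability to the compact set $K$ to obtain continuous maps $p_n \colon X \times G \to \bbR_{\ge 0}$, and then define
\[
\lambda_{n,x}(y) \coloneqq \sum_{h \in G \,:\, h^{-1}x = y} p_{n,x}(h) \qquad \text{for } (x,y) \in E_G^X.
\]
Continuity of $p_n$ together with countability of $G$ makes $\lambda_n \colon E_G^X \to \bbR_{\ge 0}$ Borel, and a reindexing by $h$ shows $\|\lambda_{n,x}\|_1 = \sum_h p_{n,x}(h) = 1$ for every $x$.

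The heart of the argument is a change-of-variables estimate. For $x \in X$ and $g \in G$, substituting $h = gk$ in the sum defining $\lambda_{n,gx}$ and then taking absolute differences gives
\[
\|\lambda_{n,gx} - \lambda_{n,x}\|_1 \le \sum_{k \in G} |p_{n,gx}(gk) - p_{n,x}(k)| = \|g \cdot p_{n,x} - p_{n,gx}\|_1.
\]
Applying this with $g = s$ ranging over $S \cup S^{-1} \setminus \{e\}$ and invoking topological amenability on $K$, together with the finiteness of $S$, yields
\[
\varepsilon_n \coloneqq \max_{s \in S \cup S^{-1} \setminus \{e\}} \sup_{x \in K} \|\lambda_{n,sx} - \lambda_{n,x}\|_1 \longrightarrow 0 \quad \text{as } n \to \infty.
\]

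To conclude u-amenability of $\sch(X,S)|_K$, I would use Lemma~\ref{lemma:subgraph_ua} with $X_0 = X$, $E_0 = E_G^X$, $X_1 = K$, $\rho_1 = \rho_{\sch(X,S)|_K}$, and $E_1 = E_{\sch(X,S)|_K}$. The hypothesis \eqref{eq:ua_Y} follows by the triangle inequality: given $x, y \in K$ with $\rho_{\sch(X,S)|_K}(x,y) < r$, any path $x = x_0, x_1, \dots, x_d = y$ in $K$ with $d < r$ and $x_{i+1} = s_i x_i$, $s_i \in S \cup S^{-1} \setminus \{e\}$, yields
\[
\|\lambda_{n,x} - \lambda_{n,y}\|_1 \le \sum_{i=0}^{d-1} \|\lambda_{n,x_i} - \lambda_{n,x_{i+1}}\|_1 \le r\,\varepsilon_n,
\]
and the right-hand side tends to $0$ independently of $x,y$. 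The only genuinely delicate point is selecting the correct push-forward so that the left-regular convention $g \cdot p(h) = p(g^{-1}h)$ from the definition of topological amenability matches the reindexing in $\|\lambda_{n,gx} - \lambda_{n,x}\|_1$; using $h \mapsto h^{-1}x$ (rather than $h \mapsto hx$) is precisely what makes this cancellation clean. Once that bookkeeping is fixed, the rest is routine.
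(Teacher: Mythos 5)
Your proposal is correct and follows essentially the same route as the paper: the same push-forward $\lambda_{n,x}(y) = \sum_{h^{-1}x = y} p_{n,x}(h)$ and the same application of Lemma~\ref{lemma:subgraph_ua} with $E_0 = E_G^X$ and $E_1 = E_{\sch(X,S)|_K}$. The paper's proof is terser, leaving the change-of-variables estimate and the triangle-inequality step to the reader, but your verification of these is exactly what is implicitly required.
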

\begin{proof}
Let $(p_n)_{n \in \bbN}$ is a sequence of maps witnessing topological amenability of $G \curvearrowright X$ for $K \subseteq X$, and define the Borel maps
\begin{align*}
\lambda_n \colon E_G^X &\to \bbR_{\ge 0} \\
(x,y) &\mapsto \sum_{\substack{g \in G \\ g^{-1}x = y}} p_{n,x}(g).
\end{align*}
Then the sequence $\lambda_n$ satisfies the assumption of Lemma \ref{lemma:subgraph_ua} with $E = E_G^X$, $E_0 = E_{\sch(X, S)|_K}$ and $\rho_0 = \rho_{\sch(X, S)|_K}$, hence the graph $\sch(X, S)|_K$ is u-amenable.
\end{proof}

We report a result from \cite[Appendix A]{FKSV} which, when combined with the Connes--Feldman--Weiss theorem, relates topological amenability to measure-hyperfiniteness of $E_G^X$ in the case of continuous actions on Polish spaces. 

\begin{theorem}[{\cite{CFW},\cite[Appendix A]{FKSV}}] \label{theorem:appendix}
Let $G \curvearrowright X$ be a continuous action of a countable group on a Polish space. The following are equivalent:
\begin{enumerate}
\item $G \curvearrowright X$ is topologically amenable,
\item $E_G^X$ is measure-hyperfinite and every stabilizer is amenable.
\end{enumerate}
\end{theorem}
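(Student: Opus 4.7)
The plan is to prove the two implications separately. The direction (1) $\Rightarrow$ (2) should follow in a fairly standard manner by combining the Connes--Feldman--Weiss theorem with a classical averaging argument at stabilizers. The harder direction (2) $\Rightarrow$ (1) requires first combining measure-hyperfiniteness with amenability of stabilizers to produce pointwise/measurable witnesses of amenability for the $G$-action, and then upgrading these to \emph{continuous} witnesses on each compact subset of $X$, which is the non-trivial content of the Renault--Anantharaman-Delaroche equivalence between topological and measure-wise amenability for étale groupoids, specialized to transformation groupoids.

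For (1) $\Rightarrow$ (2), I would first observe that topological amenability immediately yields Borel amenability of $E_G^X$ in the sense of Definition \ref{def:amenability}: given a Borel probability measure $\mu$ on $X$, inner regularity produces an increasing sequence of compact sets $K_m$ with $\mu\bigl(\bigcup_m K_m\bigr) = 1$; applying the defining property of topological amenability on each $K_m$ and diagonalizing yields Borel maps $\lambda_n$ on a set of full $\mu$-measure converting the $p_{n, x}$ to $\ell^1$ functions on $[x]_{E_G^X}$ via the projection $g \mapsto gx$, analogously to the construction in the proof of Proposition \ref{prop:topologically_amenable}. The Connes--Feldman--Weiss theorem then gives hyperfiniteness on a full-measure subset, establishing measure-hyperfiniteness. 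Amenability of a stabilizer $G_x$ follows by restricting $p_{n, x}$ to $G_x$-cosets in $G$ and averaging: the resulting $G_x$-means on $G$ become approximately $G_x$-invariant thanks to condition (2) of topological amenability applied with $K = \{x\}$, yielding a Reiter sequence for $G_x$.

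For (2) $\Rightarrow$ (1), the first step is to reverse the use of Connes--Feldman--Weiss: measure-hyperfiniteness implies that $E_G^X$ is amenable in the sense of Definition \ref{def:amenability} with respect to every Borel probability measure $\mu$ on $X$. Combining this with amenability of every stabilizer $G_x$, a standard lifting argument (essentially transferring a Reiter sequence on $[x]_{E_G^X}$ to a Reiter sequence on $G$ by selecting, measurably, coset representatives and convolving with Følner sets for $G_x$) produces, for each $\mu$, Borel maps $p_n^\mu \colon X \times G \to \bbR_{\ge 0}$ satisfying condition (2) of topological amenability $\mu$-almost everywhere on $X$. The final and critical step is to replace these measure-dependent Borel maps by a single sequence of continuous maps that works on every compact subset of $X$. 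This is done by contraposition and a Hahn--Banach/weak-$*$ compactness argument: failure of topological amenability on some compact $K \subseteq X$ and some $g \in G$ produces, via separation in the dual of $C(K, \ell^1(G))$, a state that disintegrates into a Borel probability measure $\mu$ on $K$ together with a $G$-mean obstructing the approximate invariance of any measurable $p_n$, contradicting the amenability of $E_G^X$ with respect to $\mu$ plus the amenability of stabilizers.

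The main obstacle will be this last continuity upgrade. The amenability witnesses obtained from measure-hyperfiniteness are purely Borel and depend on the chosen measure, and there is no a priori way to patch them together into continuous maps on $X \times G$. The Hahn--Banach argument circumvents this by extracting the obstructing measure from a failure of topological amenability, but its execution requires a careful choice of function spaces and topologies (the relevant one being the weak topology inherited from $C(K) \otimes \ell^1(G)$) so that the separating functional indeed corresponds to a Borel probability measure on $K$ and a finitely additive mean on $G$ that together witness non-amenability of the $G$-action in the measurable sense. Once this is in place, the whole argument collapses to citing the equivalence established in \cite[Appendix A]{FKSV}.
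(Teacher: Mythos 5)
The paper does not actually prove this statement: it is imported verbatim, with the equivalence attributed to \cite{CFW} together with \cite[Appendix A]{FKSV}, and is then used as a black box in Corollary \ref{cor: continuous action}. So there is no internal proof to compare yours against; what can be assessed is whether your outline is a faithful and complete account of how the cited results combine.

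Your direction (1) $\Rightarrow$ (2) is a correct and essentially complete sketch: pushing the $p_{n,x}$ forward along $g \mapsto g^{-1}x$ gives the $\lambda_n$ exactly as in Proposition \ref{prop:topologically_amenable}, tightness plus diagonalization over an exhausting sequence of compacta gives $\mu$-a.e.\ amenability of $E_G^X$ for every $\mu$, Connes--Feldman--Weiss then gives measure-hyperfiniteness, and taking $K=\{x\}$ turns $p_{n,x}$ into a Reiter sequence for the free left translation action of $G_x$ on $G$, whence $G_x$ is amenable. The direction (2) $\Rightarrow$ (1), however, is not a proof but a description of where the difficulty sits. The two genuinely hard steps --- (a) assembling, from $\mu$-a.e.\ amenability of the \emph{equivalence relation} and amenability of the stabilizers, approximate invariance data for the \emph{action} (an extension argument through the isotropy bundle), and (b) upgrading measure-dependent Borel witnesses to a single sequence of continuous witnesses uniform on compacta --- are precisely the content of \cite[Appendix A]{FKSV}, and your outline ends by ``citing the equivalence established in \cite[Appendix A]{FKSV}'', i.e.\ by invoking the theorem being proved. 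The Hahn--Banach/weak-$*$ separation strategy you sketch for (b) is the right idea in the locally compact groupoid setting, but you give no execution of it, and for a Polish (not locally compact) $X$ the choice of function spaces is exactly the delicate point that made a dedicated appendix necessary. As a self-contained argument the proposal is therefore incomplete in its harder half; as a map of how the quoted references yield the statement, it is accurate, and it matches the paper's (purely citational) treatment.
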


We use the equivalence above to prove the following corollary of Proposition \ref{prop:topologically_amenable}.

\begin{corollary}\label{cor: continuous action}
Let $G \curvearrowright X$ be a continuous action of a countable group on a $\sigma$-compact Polish space. If the stabilizer of every point is amenable and $E_G^X$ is measure-hyperfinite, then $E_G^X$ is hyper-u-amenable.  In particular,  $E_G^X$ is hyper-u-amenable whenever $E_G^X$ is measure-hyperfinite (e.g. if $E_G^X$ is amenable) and the action is free.
\end{corollary}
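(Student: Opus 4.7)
The plan is to translate the hypotheses into topological amenability of $G \curvearrowright X$ via Theorem \ref{theorem:appendix}, and then exhibit a graphing of $E_G^X$ as an increasing union of u-amenable Borel graphs supported on a compact exhaustion of $X$. Observe first that the ``in particular'' clause is already subsumed by the main statement, since for a free action every stabilizer is trivial and therefore trivially amenable.

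The first step is to apply Theorem \ref{theorem:appendix} to promote the joint assumption of measure-hyperfiniteness of $E_G^X$ together with amenability of every stabilizer into topological amenability of $G \curvearrowright X$. Fix an enumeration $G = \{g_n : n \in \bbN\}$, let $S_n \coloneqq \{g_0, \ldots, g_n\}$, and, using $\sigma$-compactness, fix an increasing sequence of compact sets $(K_n)_{n \in \bbN}$ with $\bigcup_n K_n = X$. Proposition \ref{prop:topologically_amenable} then yields that each Schreier graph $\cH_n \coloneqq \sch(X, S_n)|_{K_n}$, viewed as a Borel graph on $K_n$, is u-amenable.

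Next, I would inflate each $\cH_n$ to a Borel graph $\cG_n \coloneqq (X, R_n)$ on the whole of $X$ by keeping the same edges on $K_n$ and declaring every point of $X \setminus K_n$ to be isolated. Monotonicity $R_n \subseteq R_{n+1}$ is immediate from $K_n \subseteq K_{n+1}$ and $S_n \subseteq S_{n+1}$, and any distinct pair $(x, y) \in E_G^X$ satisfies $y = g_i x$ with $g_i \in S_n$ and $x, y \in K_n$ for $n$ large enough, so $(x, y) \in R_n$; thus $\cG \coloneqq (X, \bigcup_n R_n)$ is a graphing of $E_G^X$. To upgrade u-amenability from $\cH_n$ to $\cG_n$, I would extend each witnessing Borel map on $E_{\cH_n}$ to a Borel map on $E_{\cG_n}$ by assigning Dirac masses at the isolated vertices outside $K_n$; this extension is Borel since $K_n$ is Borel. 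Because every non-trivial $E_{\cG_n}$-class is confined to $K_n$ and the $\cG_n$-path distance agrees with $\rho_{\cH_n}$ on $K_n^2$, the uniform smallness condition \eqref{item2:ua} transfers verbatim from $\cH_n$ to $\cG_n$.

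Putting these pieces together, $\cG$ is a graphing of $E_G^X$ which is an increasing union of u-amenable Borel graphs, matching Definition \ref{def:hyper-u-amenable} exactly. I do not expect any genuine obstacle: all the real work has been pushed into Proposition \ref{prop:topologically_amenable} and Theorem \ref{theorem:appendix}. The only point requiring care is reconciling the compactness hypothesis of Proposition \ref{prop:topologically_amenable} with the need for a Borel graphing defined on all of $X$, which is precisely what $\sigma$-compactness of $X$ delivers via the exhaustion $(K_n)_{n \in \bbN}$.
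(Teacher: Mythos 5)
Your proof is correct and follows essentially the same route as the paper: Theorem \ref{theorem:appendix} yields topological amenability, Proposition \ref{prop:topologically_amenable} gives u-amenability of the Schreier graphs restricted to a compact exhaustion, and their increasing union is the complete graphing of $E_G^X$. Your extra step of extending each graph from $K_n$ to all of $X$ by isolated vertices with Dirac masses is a valid, slightly more explicit treatment of a point the paper leaves implicit.
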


\begin{proof}
By Theorem \ref{theorem:appendix}, the action $G \curvearrowright X$ is topologically amenable. Since $X$ is $\sigma$-compact, there is an increasing sequence of compact sets $K_n$ such that $X = \bigcup_{n \in \bbN} K_n$. Let $\{ g_n \}_{n \in \bbN}$ be an enumeration of $G$ and define
\[
\cG_n \coloneqq \sch(X, \{g_0, \ldots, g_n\})|_{K_n}, \quad n \in \bbN.
\]
Each $\cG_n$ is u-amenable by Proposition \ref{prop:topologically_amenable}, and since the sequence $(\cG_n)_{n \in \bbN}$ is increasing, and its union is the complete graphing of $E_G^X$, it follows that $E_G^X$ is hyper-u-amenable.
\end{proof}

\subsubsection{Actions of amenable groups}
\begin{proposition} \label{prop:group_action}
Let $G$ be a countable amenable group, let $X$ be a standard Borel space, and let $G \curvearrowright X$ be a Borel action. Then $E_G^X$ is hyper-u-amenable. 
\end{proposition}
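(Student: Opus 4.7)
The plan is to build the required increasing union of u-amenable graphings directly from a Følner sequence of $G$.

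First, I would enumerate $G = \{g_n\}_{n \in \bbN}$ (with $g_0 = e$), set $S_n \coloneqq \{g_0, \dots, g_n\}$, and consider the Schreier graphs $\cG_n \coloneqq \sch(X, S_n)$ with edge set $R_n$. By construction each $\cG_n$ has finite degree (bounded by $2(n+1)$), the edge sets are increasing, $R_n \subseteq R_{n+1}$, and $\bigcup_{n \in \bbN} R_n$ is the edge set of a graphing of $E_G^X$ (indeed, it coincides with the complete graphing). So once u-amenability of each $\cG_n$ is established, hyper-u-amenability of $E_G^X$ follows immediately from Definition \ref{def:hyper-u-amenable}.

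Next, fix a (left) Følner sequence $(F_m)_{m \in \bbN}$ in $G$ and let $p_m \colon G \to \bbR_{\ge 0}$ be $p_m \coloneqq |F_m|^{-1} \mathbf{1}_{F_m}$, so that $\|p_m\|_1 = 1$ and $\|p_m - h \cdot p_m\|_1 \to 0$ as $m \to \infty$ for every fixed $h \in G$, uniformly on finite subsets of $G$ (here $h \cdot p_m(g) \coloneqq p_m(h^{-1}g)$). Define, for each $m \in \bbN$,
\begin{align*}
\lambda_m \colon E_G^X &\to \bbR_{\ge 0}, \\
(x,y) &\mapsto \sum_{\substack{g \in G \\ g^{-1}x = y}} p_m(g).
\end{align*}
These are Borel, $\lambda_{m,x} \in \ell^1([x]_{E_G^X})$, and $\|\lambda_{m,x}\|_1 = \|p_m\|_1 = 1$.

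Now fix $n$ and verify condition \eqref{item2:ua} of Definition \ref{def:uniform_amenability} for $\cG_n$. For any $r > 0$ and any pair $(x,y) \in E_{\cG_n}$ with $\rho_{\cG_n}(x,y) \le r$, one has $y = hx$ for some $h$ in the finite set $T_{n,r} \coloneqq (S_n \cup S_n^{-1})^r \subseteq G$. A direct computation using the change of variable $g \mapsto h^{-1}g$ shows
\[
\lambda_{m,y}(z) = \sum_{g^{-1}hx = z} p_m(g) = \sum_{g'^{-1}x = z} p_m(hg') = \sum_{g'^{-1}x = z} (h^{-1} \cdot p_m)(g'),
\]
so that $\| \lambda_{m,x} - \lambda_{m,y} \|_1 \le \| p_m - h^{-1} \cdot p_m \|_1$. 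Taking the supremum over $T_{n,r}$,
\[
\sup_{\{(x,y) \in E_{\cG_n} : \rho_{\cG_n}(x,y) \le r\}} \| \lambda_{m,x} - \lambda_{m,y} \|_1 \;\le\; \max_{h \in T_{n,r}} \| p_m - h^{-1} \cdot p_m \|_1 \;\xrightarrow[m \to \infty]{}\; 0
\]
by the Følner property, since $T_{n,r}$ is finite. Thus $\cG_n$ is u-amenable, and the proposition follows. There is no real obstacle here; the only point to check carefully is the algebraic identity relating $\lambda_{m,x}$ and $\lambda_{m,hx}$ to the left translate $h^{-1} \cdot p_m$, after which Følner and finiteness of $T_{n,r}$ do all the work.
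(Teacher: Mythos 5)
Your overall strategy coincides with the paper's: exhaust the complete graphing of $E_G^X$ by the Schreier graphs $\cG_n \coloneqq \sch(X,\{g_0,\dots,g_n\})$ and witness u-amenability of each $\cG_n$ by averaging over F{\o}lner sets. The invariance computation relating $\lambda_{m,x}$ and $\lambda_{m,hx}$ to the translate $h^{-1}\cdot p_m$, and the uniformity over the finite set $T_{n,r}$, are correct. There is, however, one genuine gap in how you certify condition (1) of Definition \ref{def:uniform_amenability}. Since $S_n = \{g_0,\dots,g_n\}$ need not generate $G$, the connected component $[x]_{E_{\cG_n}}$ is the orbit of $x$ under the subgroup $\langle g_0,\dots,g_n\rangle$ and may be a proper subset of $[x]_{E_G^X} = Gx$. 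Your $\lambda_{m,x}$ is a probability measure on all of $Gx$, not on $[x]_{E_{\cG_n}}$; its restriction to the $\cG_n$-component can have $\ell^1$-norm strictly less than $1$ (mass leaks to the rest of the $G$-orbit), so as written the $\lambda_m$ do not satisfy Definition \ref{def:uniform_amenability} for $E_{\cG_n}$ --- they only satisfy the hypotheses of Lemma \ref{lemma:subgraph_ua} with $E_0 = E_G^X$, $E_1 = E_{\cG_n}$ and $\rho_1 = \rho_{\cG_n}$.

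The gap is easily closed in either of two ways. You can invoke Lemma \ref{lemma:subgraph_ua} directly: your computation establishes exactly the hypothesis \eqref{eq:ua_Y} of that lemma, and this is precisely how the paper handles the analogous situation for topologically amenable actions in Proposition \ref{prop:topologically_amenable}. Alternatively, you can do what the paper's own proof of this proposition does: for each $n$, take a F{\o}lner sequence of the finitely generated subgroup $G_n \coloneqq \langle g_0,\dots,g_n\rangle$ rather than of $G$ itself; the resulting measures are then automatically supported on $G_n x = [x]_{E_{\cG_n}}$ and Definition \ref{def:uniform_amenability} is met verbatim. With either repair your argument is complete.
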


\begin{proof}
Let $\{ g_n \}_{n \in \bbN}$ be an enumeration of $G$ and let $\cG \coloneqq (X, E_G^X \setminus \Delta(X))$ be the complete graphing of $E$. For every $m\in \bbN$ consider the subgroup $G_m \coloneqq \langle g_0, \dots, g_m \rangle$ and the corresponding Schreier graph $\cG_m \coloneqq \sch(X, \{g_0, \ldots, g_m\})$. Denote the edge relation on $\cG_m$ as $R_m$. Note that the graphs $\cG_m$ have finite degree, that $R_m \subseteq R_{m+1}$ for all $m \in \bbN$, and that $E_G^X \setminus \Delta(X) = \bigcup_{m \in \bbN} R_m$. To see that $E_G^X$ is hyper-u-amenable, it remains to show that all $\cG_m$'s are u-amenable.

To do this, fix $m \in \bbN$, and let $(F_{n})_{n \in \bbN}$ be a F{\o}lner sequence for the subgroup $G_m$. U-amenability of $\cG_m$ is then witnessed by the family $(\lambda_n)_{n \in \bbN}$ of Borel functions defined as \begin{align*}
\lambda_n \colon E_{\cG_m} &\to \bbR_{\ge 0} \\
(x,y) &\mapsto \frac{1}{|F_n|} \abs{ \{ g \in F_n : gx = y \}}.
\end{align*}
\end{proof}

We remark that the proof above actually shows that if $G$ is a finitely generated amenable group and $S$ is some finite generating set, then the Schreier graph $\sch(X, S)$ is u-amenable.

\subsubsection{Borel bounded equivalence relations} The last class of examples of hyper-u-amenable equivalence relations that we provide is that of amenable Borel bounded equivalence relations, of which we recall the definition below. For $f , g \in \bbN^\bbN$, we write $f=^\ast g$ (and $f \le ^\ast g$) if $f(n) = g(n)$ (respectively $f(n) \le g(n)$) for all but finitely many $n \in \bbN$. 
\begin{definition}[{\cite{BJ}}]
Let $X$ be a standard Borel space. A countable Borel equivalence relation $E \subseteq X^2$ is \emph{Borel bounded}, if for every Borel function $\phi \colon X \to \bbN^\bbN$, there is a function $\psi \colon X \to \bbN^\bbN$ such that $\phi(x) \le^* \psi(x)$ for all $x \in X$, and that $\psi(x) =^*\psi(y)$ whenever $xEy$.
\end{definition}

Hyperfinite equivalence relations are Borel bounded (\cite{BJ}) and, remarkably, it is not known whether \emph{every} countable Borel equivalence relation is Borel bounded, although Thomas proved in \cite[Theorem 5.2]{thomas} that equivalence relations that are not Borel bounded exist, if Martin's conjecture on degree invariant Borel maps holds.

\begin{proposition}\label{prop: Borel bounded}
Let $X$ be a standard Borel space and let $E \subseteq X^2$ be an amenable, Borel bounded, countable Borel equivalence relation. Then $E$ is hyper-u-amenable. 
\end{proposition}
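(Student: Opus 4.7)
The plan is to leverage Borel boundedness to ``speed up'' the amenability witnesses $(\lambda_n)_{n \in \bbN}$ by a function $\psi(x)(k)$ that is eventually constant on each $E$-class, and then restrict to pairs of points where this speed has already synchronized. This will yield an increasing sequence of u-amenable Borel graphs whose union is the complete graphing of $E$.

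Concretely, I would invoke Feldman--Moore to write $E = E_G^X$ for a Borel action of a countable group $G = \{g_j\}_{j \in \bbN}$, and set $G_n := \{g_j^{\pm 1} : j \le n\}$. Amenability ensures that $\|\lambda_{m,x} - \lambda_{m, g_ix}\|_1 \to 0$ in $m$ for each $x \in X$ and $i \in \bbN$, so
\[
f_{i,k}(x) := \min\{N \in \bbN : \forall m \ge N, \ \|\lambda_{m,x} - \lambda_{m, g_ix}\|_1 < 2^{-k}\}
\]
is Borel and everywhere finite. Define the Borel map $\phi : X \to \bbN^\bbN$ by $\phi(x)(n) := \max_{i,k \le n} f_{i,k}(x)$ and apply Borel boundedness to obtain a (Borel) $\psi : X \to \bbN^\bbN$ with $\phi(x) \le^* \psi(x)$ for every $x$ and $\psi(x) =^* \psi(y)$ whenever $xEy$; replacing $\psi$ by the pointwise $\max(\phi, \psi)$ makes $\phi \le \psi$ hold everywhere while preserving the class invariance modulo $=^*$. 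Then set
\[
R_n := \{(x,y) \in X^2 \setminus \Delta(X) : \exists g \in G_n, \ y = gx, \text{ and } \psi(x)(k) = \psi(y)(k) \ \forall k \ge n\}.
\]
Both clauses weaken with $n$, so $R_n \subseteq R_{n+1}$; and any $(x, g_jx) \in E \setminus \Delta(X)$ lies in $R_n$ once $n$ exceeds both $j$ and the finite threshold past which $\psi(x)$ and $\psi(g_jx)$ coincide, giving $\bigcup_n R_n = E \setminus \Delta(X)$.

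Finally, I would verify u-amenability of each $\cG_n := (X, R_n)$ via Lemma \ref{lemma:subgraph_ua}. The candidate witnessing sequence is the Borel map $\mu_k : E \to \bbR_{\ge 0}$ defined by $\mu_k(x,y) := \lambda_{\psi(x)(k)}(x,y)$, for which each $\mu_{k,x}$ is a probability measure on $[x]_E$. For any edge $(x, g_ix) \in R_n$ with $i \le n$ and any $k \ge n$, the defining condition of $R_n$ gives $N := \psi(x)(k) = \psi(g_ix)(k)$, while $i \le n \le k$ forces $N \ge \phi(x)(k) \ge f_{i,k}(x)$, hence $\|\mu_{k,x} - \mu_{k, g_ix}\|_1 = \|\lambda_{N,x} - \lambda_{N, g_ix}\|_1 < 2^{-k}$. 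By the triangle inequality this upgrades, for each fixed $r > 0$ and $k \ge n$, to
\[
\sup \bigl\{ \|\mu_{k,x} - \mu_{k,y}\|_1 : (x, y) \in E_{\cG_n}, \ \rho_{\cG_n}(x,y) < r \bigr\} \le r \cdot 2^{-k} \to 0,
\]
and Lemma \ref{lemma:subgraph_ua} applied with $E_0 = E$, $E_1 = E_{\cG_n}$, and $\rho_1 = \rho_{\cG_n}$ yields u-amenability of $\cG_n$. I expect the main obstacle to be calibrating the definition of $R_n$ correctly: the universal quantifier ``for all $k \ge n$'' is crucial both for the nesting $R_n \subseteq R_{n+1}$ and for letting a single Borel sequence $\mu_k$ witness u-amenability simultaneously on every $\cG_n$; weaker variants (``eventually'' or ``at $k = n$ only'') either break the nesting or allow $\psi(x)(k)$ and $\psi(y)(k)$ to disagree at relevant indices, destroying the uniform bound on edges.
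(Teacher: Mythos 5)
Your proof is correct and follows essentially the same route as the paper's: both use Borel boundedness to produce a class-invariant-mod-finite speed-up $\psi$ dominating the modulus $\phi$ of the amenability witnesses, define the increasing graphs $R_n$ by generator membership plus agreement of $\psi$ beyond index $n$, and witness u-amenability with $\lambda_{\psi(x)(k),x}$. The only (immaterial) differences are that you normalize $\psi \ge \phi$ everywhere rather than carrying that domination as an extra clause in the definition of $R_n$, and you build the graphs inside the complete graphing rather than an arbitrary fixed graphing of $E$.
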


\begin{proof}
Let $\lambda_n \colon E \to \bbR_{\ge 0}$ be a sequence of Borel functions witnessing amenability of $E$, so in particular $\lambda_{n,x} \in \ell^1([x]_{E})$ and $\| \lambda_{n,x} \|_1 = 1$ for every $n \in \bbN$ and $x \in X$, and
\[
\| \lambda_{n,x} - \lambda_{n,y} \|_1 \to 0, \quad n \to \infty,  \text{ whenever }xEy.
\]

Let $G$ be a countable group, enumerated as $\{ g_n \}_{n\in \bbN}$, along with a Borel action $G \curvearrowright X$ such that $E = E_G^X$ (\cite{FM}). Define the Borel function $\phi \colon X \to \bbN^ \bbN$ as
\[
\phi(x)(n) \coloneqq \min\left\{m \in \bbN : \forall j \ge m, \, \forall  i \le n \text{ if }  (x, g_i x) \in R  \text{ then }  \|\lambda_{j, x} - \lambda_{j, g_i x}\|_1 < 1/n \right\}.
\]
By Borel boundedness there exists a Borel map $\psi : X \to \bbN^\bbN$ such that $\psi(x) =^* \psi(y)$ if $x E y$, and that $\phi(x) \le^* \psi(x)$ for all $x \in X$. 

For $k \in \mathbb{N}$, define a graph $\cG_k \coloneqq (X, R_k)$ by setting $x R_k y$ if
\begin{enumerate}[label=(\alph*)]
\item $y = g_ix$ and $x = g_jy$, for some $i, j \le k$,
\item $\psi(x)(n) = \psi(y)(n)$, for all $n \ge k$,
\item $\max \{\phi(x)(n), \phi(y)(n)\} \le \psi(x)(n)$, for all $n \ge k$.
\end{enumerate}

It is clear that $R_k \subseteq R_{k+1}$ for all $k \in \bbN$ and that $E = \bigcup_{k \in \bbN} R_k$. It remains to show that each subgraph $\cG_k$ is u-amenable. To do so, given $n \in \mathbb{N}$ and $x \in X$, define  $\lambda^\prime_{n, x} \coloneqq \lambda_{\psi(x)(n), x}$. 
We claim that this sequence witnesses u-amenability of $\cG_k$, for any $k\in \bbN$. Indeed, note that if $x R_k y$ and $n \ge k$, then $\psi(x)(n) = \psi(y)(n)$ and $\phi(x)(n) \le \psi(x)(n)$, hence
\[
\|\lambda^\prime_{n, x} - \lambda^\prime_{n, y}\|_1 = \|\lambda_{\psi(x)(n), x} - \lambda_{\psi(x)(n), y}\|_1 < \frac1n.
\]
By the triangle inequality, we thus get that 
\[
 \|\lambda^\prime_{n, x} - \lambda^\prime_{n, y}\|_1 \le \frac{r}{n}, \quad n \ge k, \, x, y \in X \text{ s.t. } \rho_{\cG_k}(x,y) \le r,
 \]
 and therefore $\sup_{\{ (x,y) \in E_{\cG_k}  : \rho_{\cG_k}(x,y) < r \}} \| \lambda'_{n,x} - \lambda'_{n,y} \|_1 \to 0$ as $n \to \infty$, for every $r > 0$.
\end{proof}

\section{Graphs with partial orientations} \label{section:partial_orientations}
Given a Borel graph $\cG \coloneqq (X, R)$ with finite degree, in this section we provide technical conditions sufficient to deduce $\asdim(X, \rho_\cG) < \infty$.

This is done gradually: we first consider the case where there is a Borel orientation $\vv{R} \subseteq R$ with  $\Degout(X, \vv{R}) \le 1$, in which case a direct application of some results in \cite{CJMST-D} implies that $\asdim (X,\rho_\cG) \le 1$ (Proposition \ref{prop:oriented}). We then relax our assumptions and show that $\asdim(X, \rho_\cG) \le 3$ if $R$ admits partial Borel orientations such that the non-oriented edges can be chosen to be arbitrarily far away from each other (Lemma \ref{lemma:delete_edges}).

Finally, in Lemma \ref{lemma:quasi-oriented} we relax even further our assumptions and prove the main technical result of the paper: $\asdim(X, \rho_\cG) \le 3$ holds if $R$ can be partitioned in two sets, $R_0 \sqcup R_1$, such that $R_0$ admits a Borel orientation with out-degree at most 1 and $\cG_1 \coloneqq (X, R_1)$ is an acyclic Borel graph with $\deg(\cG_1) \le 2$ verifying a series of other conditions. The resulting criterion will be essential to deduce finite Borel asymptotic dimension for treeable u-amenable equivalence relations in Theorem \ref{thm:ua_asdim}.

The intended setup is that of acyclic Borel graphs, and we recommend the reader keep this case in mind while attempting to gain some intuition on the hypotheses in the statements of this section.

\subsection{Orientation with out-degree at most 1} In this subsection, we focus on graphs where a Borel orientation, or a partial, yet rather large, Borel orientation, exists.

\begin{proposition} \label{prop:oriented}
Let $\cG \coloneqq (X, R)$ be a Borel graph with $\deg(\cG) < \infty$. Suppose that there exists a Borel orientation $\vv{R} \subseteq R$ such that $\Degout(X, \vv{R}) \le 1$. Then $\asdim (X,\rho_\cG) \le 1$.
\end{proposition}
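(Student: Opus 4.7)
The plan is to exhibit $\cG$ as the symmetrization of the graph of a single Borel function $X \to X$, and then invoke the corresponding asymptotic-dimension bound for such graphs from \cite{CJMST-D}.

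First, I would use the orientation $\vv{R}$ together with the out-degree bound to extract a Borel function. By Remark \ref{remark:degree_maps}, the set $X_0 \coloneqq \{x \in X : \degout(x) = 1\}$ is Borel. On $X_0$, the assumption $\Degout(X, \vv{R}) \le 1$ guarantees a unique $\vv{R}$-successor for each point, and Lusin--Novikov uniformization yields a Borel map $f \colon X \to X$ sending each $x \in X_0$ to this successor, extended (say, by the identity) on $X \setminus X_0$. Because $\vv{R}$ is a Borel orientation of $R$, every edge $\{x,y\} \in R$ is realised either by $y = f(x)$ (when $x\vv{R}y$, so $x \in X_0$) or by $x = f(y)$ (when $y\vv{R}x$), so $R$ coincides off the diagonal with the symmetrization of the functional graph of $f$; in particular $\rho_\cG$ equals the path metric associated to $f$.

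Second, I would appeal to the results of \cite{CJMST-D} asserting that the Borel graph associated to a Borel function $X \to X$ of finite degree has Borel asymptotic dimension at most $1$. This provides, for every $r > 0$, a $\rho_\cG$-uniformly bounded Borel equivalence relation on $X$ for which each ball of radius $r$ meets at most two classes. Concretely, the argument there partitions each forward $f$-orbit into finite intervals of bounded diameter whose consecutive boundaries are spaced more than $r$ apart, and then propagates this partition backwards along the finitely branching trees of $f$-preimages using the bound $\deg(\cG) < \infty$; infinite, periodic, and sink-terminated forward orbits are all handled uniformly by this marker construction.

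The main obstacle I anticipate is purely bookkeeping: matching our setup---a partial orientation completed to a total function, with possible $f$-fixed points, sinks, and periodic orbits---to the precise statement available in \cite{CJMST-D}. Once the identification of $\cG$ with a functional Borel graph is in place, no further geometric input is required to conclude $\asdim(X, \rho_\cG) \le 1$.
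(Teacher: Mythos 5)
Your proposal is correct and matches the paper's own argument essentially verbatim: the paper also defines the Borel function $f$ sending each vertex to its unique $\vv{R}$-successor (and fixing vertices of out-degree $0$), observes that $R$ is exactly the symmetrization of the graph of $f$ off the diagonal, and concludes by citing the bound $\asdim \le 1$ for functional Borel graphs of finite degree from \cite[Corollary 8.3]{CJMST-D}. The extra detail you sketch about how that cited result is proved is not needed, but nothing in your outline is off track.
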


\begin{proof}
Since $\vv{R}$ has out-degree at most one, the following bounded-to-one Borel function $f : X \to X$ is well-defined:
\begin{equation*}
f(x) \coloneqq 
\begin{cases}
    y & \text{if} \ x\vv{R}y, \\
    x & \text{otherwise}.
\end{cases}
\end{equation*}
Since $\vv{R} \cup \vv{R}^{-1} = R$, we see that the Borel set
\begin{equation*}
R_f \coloneqq \{(x, y) \in X^2 : x \ne y \text{ and either } f(x) = y \ \text{or} \ f(y) = x\}
\end{equation*}
is equal to $R$. This means that the Borel graph $\cG_f \coloneqq (X, R_f)$ is equal to $\cG$, hence $\asdim (X,\rho_\cG) \le 1$ by \cite[Corollary 8.3]{CJMST-D}.
\end{proof}

\begin{lemma}\label{lemma:delete_edges}
Let $\cG \coloneqq (X, R)$ be a Borel graph with $\deg(\cG) < \infty$. Suppose that for each $r > 0$ there is a symmetric Borel subset $Q \subseteq R$ such that
\begin{enumerate}
    \item \label{item1:delete_edges} $\rho_\cG(q_0, q_1) > r$ whenever $q_0, q_1 \in Q$ represent distinct edges\footnote{That is, if $q_0 = (x, y)$ then $q_1 \ne (x, y)$ and $q_1 \ne (y, x)$; see \eqref{eq:edge_distance} for the definition of $\rho_\cG(q_0, q_1)$.},
    \item \label{item2:delete_edges} the graph $(X, R \setminus Q)$ admits a Borel orientation with out-degree at most 1.
\end{enumerate}
Then $\asdim (X,\rho_\cG) \le 3$.
\end{lemma}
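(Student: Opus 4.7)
The plan is to resolve $\rho_\cG$-balls by combining an equivalence relation that witnesses $\asdim(X, \rho_{\cG_0}) \le 1$ with the $Q$-edges, turning the gap between $\cG_0$ and $\cG$ into a bounded amount of extra multiplicity.

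Fix a target scale $r > 0$; the aim is to produce a $\rho_\cG$-uniformly bounded Borel equivalence relation $E$ on $X$ such that every $\rho_\cG$-ball of radius $r$ meets at most four $E$-classes. First, I would choose a sufficiently large parameter $r'$, to be pinned down below, and apply the hypothesis to obtain a symmetric Borel set $Q \subseteq R$ satisfying conditions~(1) (with parameter $r'$) and~(2). By Proposition~\ref{prop:oriented} applied to $\cG_0 = (X, R \setminus Q)$, we have $\asdim(X, \rho_{\cG_0}) \le 1$, so at scale $r$ there is a Borel equivalence relation $E_0$ of $\rho_{\cG_0}$-diameter at most some $K$, whose $\rho_{\cG_0}$-balls of radius $r$ intersect at most two $E_0$-classes. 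I would then fix $r' > K + 2r$; this choice is consistent because the construction behind Proposition~\ref{prop:oriented} provides a control on $K$ purely in terms of $r$ (via the bounded-to-one function setup of \cite[Corollary~8.3]{CJMST-D}), independently of the particular $\cG_0$.

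Next, let $E$ be the Borel equivalence relation on $X$ generated by $E_0$ and $Q$. A key step is the observation that, by condition~(1) with $r' > K$, no $E_0$-class can be incident to two distinct undirected $Q$-edges: any two endpoints of such edges lie at $\rho_\cG$-distance $> r'$, while two points in the same $E_0$-class lie at $\rho_\cG$-distance at most $K < r'$ (using $\rho_\cG \le \rho_{\cG_0}$). Consequently, every $E$-class is either a single $E_0$-class or the union of exactly two $E_0$-classes joined by one $Q$-edge, and hence has $\rho_\cG$-diameter at most $2K + 1$; in particular, $E$ is $\rho_\cG$-uniformly bounded.

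To count the $E$-classes meeting $B \coloneqq B_{\rho_\cG}(x, r)$, observe that by condition~(1) with $r' > 2r$, $B$ contains at most one undirected $Q$-edge $\{a, b\}$. If $B$ contains none, every $\rho_\cG$-geodesic inside $B$ is in fact a $\cG_0$-path, so $B \subseteq B_{\rho_{\cG_0}}(x, r)$ and at most two $E$-classes are met. If such an edge is present, one assumes $\rho_\cG(x, a) \le \rho_\cG(x, b)$ without loss of generality; a minimality argument then forces the $\rho_\cG$-geodesic from $x$ to $a$ to avoid $\{a, b\}$, whence $a \in B_{\rho_{\cG_0}}(x, r)$. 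Analysing how a $\rho_\cG$-geodesic from $x$ to an arbitrary $y \in B$ can cross $\{a, b\}$ yields the covering
\[
B \subseteq B_{\rho_{\cG_0}}(x, r) \cup B_{\rho_{\cG_0}}(a, r) \cup B_{\rho_{\cG_0}}(b, r).
\]
Each of these $\rho_{\cG_0}$-balls contributes at most two $E_0$-classes; since $[a]_{E_0}$ appears in both the first and second ball and $(a, b) \in Q$ identifies $[a]_{E_0}$ with $[b]_{E_0}$ in $E$, a direct count bounds the number of $E$-classes meeting $B$ by four. The main technical obstacle is the circular interplay between $K$ and $r'$: the required lower bound on $r'$ depends on $K$, yet $K$ itself is only determined after $Q$ (and thus $r'$) has been chosen. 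This is resolved by the $\cG_0$-independent quantitative form of Proposition~\ref{prop:oriented} furnished by \cite[Corollary~8.3]{CJMST-D}.
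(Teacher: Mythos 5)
Your overall strategy---reduce to $\cG_0$ via Proposition \ref{prop:oriented} and use the sparseness of $Q$ to control how many extra classes the $Q$-edges can create---is the same as the paper's, and your covering and counting steps are sound. However, there is a genuine gap at exactly the point you flag: you need the diameter bound $K$ of the relation $E_0$ witnessing $\asdim(X,\rho_{\cG_0})\le 1$ at scale $r$ to be known \emph{before} $Q$ is chosen, since $Q$ must be taken at a scale $r' > K+2r$, while $\cG_0 = (X, R\setminus Q)$, and hence $E_0$ and $K$, only come into existence after $Q$ is fixed. The definition of Borel asymptotic dimension, and the statements of Proposition \ref{prop:oriented} and \cite[Corollary 8.3]{CJMST-D}, assert only that \emph{some} uniform bound exists; they give no quantitative control of $K$ in terms of $r$ that is independent of the particular graph. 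Your resolution rests on an unproved claim about the internals of \cite{CJMST-D}, and the claim is load-bearing: if $K \ge r'$, a single $E_0$-class may be adjacent to two distinct $Q$-edges, the relation generated by $E_0$ and $Q$ can then chain through arbitrarily many $E_0$-classes, and the uniform boundedness of $E$ fails.

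The gap is avoidable, and the paper's proof shows how: take $Q$ at scale $2r$ and take the witnessing relation $E$ for $\cG_0$ at scale $2r$ (so that each ball $B_{\rho_{\cG_0}}(\cdot,2r)$ meets at most two $E$-classes), with no constraint tying the scale of $Q$ to the diameter of $E$. If $B_{\rho_\cG}(x,r)$ contains no vertex adjacent to $Q$, it equals $B_{\rho_{\cG_0}}(x,r)$ and meets at most two classes; otherwise condition (1) at scale $2r$ yields a unique nearby $Q$-edge $(y,z)$ with, say, $y \in B_{\rho_\cG}(x,r)$, and then $B_{\rho_\cG}(x,r) \subseteq B_{\rho_\cG}(y,2r) \subseteq B_{\rho_{\cG_0}}(y,2r)\cup B_{\rho_{\cG_0}}(z,2r)$, which meets at most four classes. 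No merging of classes across $Q$-edges is needed, so the circular dependence between $r'$ and $K$ never arises. You should either replace your construction with this one or supply a proof of the uniform quantitative version of Proposition \ref{prop:oriented} that your argument invokes.
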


\begin{proof}
Fix $r > 0$. We aim to build a $\rho_\cG$-uniformly bounded Borel equivalence relation $E \subseteq E_\cG$ such that $B_{\rho_\cG}(x, r)$ intersects at most four $E$-classes, for every $x \in X$. To do so, let $Q \subseteq R$ be satisfying the assumptions of the lemma for $2r$, that is
\begin{equation} \label{eq:2r}
\text{  $\rho_\cG(q_0, q_1) >2 r$ whenever $q_0, q_1 \in Q$ are distinct. } \tag{1'}
\end{equation}

Consider $\cG_0 \coloneqq (X, R \setminus Q)$. By \eqref{item2:delete_edges} and Proposition \ref{prop:oriented} we have $\asdim (X,\rho_{\cG_0}) \le 1$, which means that there is a $\rho_{\cG_0}$-uniformly bounded (and thus $\rho_\cG$-uniformly bounded) Borel equivalence relation $E$ such that for every $x \in X$ the ball $B_{\rho_{\cG_0}}(x, 2r)$ intersects at most two distinct $E$-classes. We claim that, for every $x \in X$, the ball $B_{\rho_\cG}(x, r)$ intersects at most 4 $E$-classes.

To see this, let $Y \subseteq X$ be the set of all vertices that are adjacent to some edge in $Q$. Fix $x \in X$. If $B_{\rho_{\cG}}(x, r) \cap Y = \emptyset$, then 
\begin{equation*}
B_{\rho_{\cG}}(x, r) = B_{\rho_{\cG_0}}(x, r) \subseteq B_{\rho_{\cG_0}}(x, 2r)
\end{equation*}
and thus $B_{\rho_{\cG}}(x, r)$ intersects at most two $E$-classes.

Suppose next that $B_{\rho_{\cG}}(x, r) \cap Y \ne \emptyset$. Then by \eqref{eq:2r} there is  (up to symmetry) a unique edge $ (y, z) \in Q$ such that at least one among $y$ and $z$ belongs to  $B_{\rho_{\cG}}(x, r)$. Without loss of generality, we may assume $y \in B_{\rho_{\cG}}(x, r)$. In this case, again by \eqref{eq:2r}, we have 
\[
B_{\rho_{\cG}}(x, r) \subseteq B_{\rho_{\cG}}(y, 2r) \subseteq B_{\rho_{\cG_0}}(y, 2r) \cup B_{\rho_{\cG_0}}(z, 2r).
\]
We deduce that $B_{\rho_{\cG}}(x, r)$ intersects at most 4 different $E$-classes.
\end{proof}

\subsection{Acyclic graphs with degree at most 2}
If $\cG \coloneqq (X, R)$ is an acyclic Borel graph with $\deg(\cG) \le 2$, then it is always possible to remove a very sparse set of edges so that the resulting graph has connected components whose size is uniformly bounded by a given constant. In Lemma \ref{lemma:choose_edges} we prove that this can be done while also ensuring that the set of removed edges is Borel and far away from leaves.

\begin{remark} \label{remark:components}
 Let $\cG\coloneqq (X, R)$ be an acyclic Borel graph with $\deg(\cG) \le 2$. Then each non-trivial connected component $[x]_\cG$ of $\cG$ is isomorphic to one of the following graphs, depending on the number of leaves it has:
\begin{enumerate}[label=(\roman*)]
\item \label{item:Z} if $|\leaves([x]_\cG)| = 0$ then $[x]_\cG \cong (\bbZ, S_\bbZ \cup S_\bbZ^{-1})$, where $S_\bbZ \coloneqq \{(k, k+1) : k \in \bbZ\}$,
\item \label{item:N}  if $|\leaves([x]_\cG)| = 1$ then $[x]_\cG \cong (\bbN, S_\bbN \cup S_\bbN^{-1})$, where $S_\bbN \coloneqq \{(k, k+1) : k \in \bbN\}$.
\item \label{item:n} if $|\leaves([x]_\cG)| = 2$ then there is $n \in \bbN$ such that $[x]_\cG \cong (\{1 ,\dots, n \}, S_n \cup S_n^{-1})$, where $S_n \coloneqq \{ (k, k+1) : k < n \}$.
\end{enumerate}
Set $X_i \coloneqq \{ x \in X : |\leaves([x]_\cG)| = i \}$ for $i = 0, 1, 2$. We then have $X = X_0 \sqcup X_1 \sqcup X_2$ and each $X_i$ is Borel. Indeed, $X_2$ is Borel, being the set of vertices whose connected component is finite (see Remark \ref{remark:degree_maps}), $X_1$ is Borel since it is the set of vertices with infinite connected component and finite distance from $\leaves(\cG)$, and $X_0$ is the complement of $X_1 \sqcup X_2$.

We will implicitly use these facts in the next proof.
\end{remark}

\begin{lemma}\label{lemma:choose_edges}
Let $\cG \coloneqq (X, R)$ be an acyclic Borel graph with $\deg(\cG) \le 2$ and fix $r > 0$. There is a symmetric Borel set $Q \subseteq R$ such that
\begin{enumerate}
    \item \label{item1:choose_edges}  $\rho_\cG(q_0, q_1) > r$ whenever $q_0, q_1 \in Q$ represent distinct edges\footnote{That is, if $q_0 = (x, y)$ then $q_1 \ne (x, y)$ and $q_1 \ne (y, x)$; see \eqref{eq:edge_distance} for the definition of $\rho_\cG(q_0, q_1)$.},
        \item \label{item2:choose_edges} $\rho_\cG(x,y) \ge r$ whenever $x \in X$ is adjacent to an edge in $Q$, and $y \in \leaves(\cG)$,
    \item \label{item3:choose_edges} $\abs{[x]_{\cG_0}} \le 2r +4$, with $\cG_0 \coloneqq (X, R \setminus Q)$, for any $x \in X$.
\end{enumerate}
\end{lemma}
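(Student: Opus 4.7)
Set $N \coloneqq r + 2$. The strategy is, for each $\cG$-component $C \subseteq Y$, to produce a Borel partition of $C$ into consecutive intervals of length in $[N, 2N]$, and to take $Q$ to be the set of edges of $\cG$ joining distinct intervals within the same component. By the preceding remark, each non-trivial $\cG$-component is isomorphic to $(\bbZ, S_\bbZ \cup S_\bbZ^{-1})$, $(\bbN, S_\bbN \cup S_\bbN^{-1})$, or a finite path, and components meeting $Y$ have size $> 4r + 6 > 2N$, so each such component necessarily receives at least one cut.

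Granted such an interval partition, the three conditions follow by elementary verifications. For (1), two distinct edges $q_0, q_1 \in Q$ in the same $\cG$-component are separated by an interval of length $\ge N$, so if $x_i$ is adjacent to $q_i$ then $\rho_\cG(x_0, x_1) \ge N - 1 = r + 1 > r$; if $q_0, q_1$ lie in different components then $\rho_\cG(x_0, x_1) = \infty$. For (3), each component of $\cG_0 \coloneqq (X, R \setminus Q)$ is either an interval of the partition, of length at most $2N = 2r + 4 \le 4r + 6$, or an intact $\cG$-component outside $Y$, of size at most $4r + 6$ by definition. For (2), an interval of the partition containing a leaf has length at least $N$, and since its parent component has size $> 2N$ it is bordered on the opposite side by a $Q$-edge; that edge's nearer endpoint thus lies at vertex-distance $\ge N - 1 = r + 1 > r$ from the leaf.

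It remains to construct the interval partition in a Borel way. For components of type $(\bbN, S_\bbN \cup S_\bbN^{-1})$ or finite paths, the leaves form a Borel subset of $X$ and the distance $d_L(x)$ from $x$ to the nearest leaf is Borel on $Y$; the partition can then be defined by a canonical greedy procedure anchored at a distinguished leaf, where in the finite two-leaves case the distinguished leaf is chosen via a Borel linear ordering on $X$. The main obstacle is the case of $(\bbZ, S_\bbZ \cup S_\bbZ^{-1})$-components, where no leaf provides an anchor. Here I would invoke a Borel marker lemma: since $\deg(\cG) \le 2$, $E_\cG$ restricted to the invariant Borel set of $\bbZ$-type components in $Y$ is hyperfinite and in fact generated by a Borel $\bbZ$-action, so the standard marker constructions (as in Slaman--Steel or the Kechris marker lemma for hyperfinite equivalence relations) yield a Borel subset $M \subseteq X$ whose intersection with each $\bbZ$-component is a bi-infinite sequence of vertices with consecutive $\cG$-distances in $[N, 2N]$. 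The set $M$ directly encodes the required interval decomposition on $\bbZ$-components, and patching the three cases together produces the desired $Q$.
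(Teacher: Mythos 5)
Your overall strategy is genuinely different from the paper's: you partition the \emph{vertices} of each large component into consecutive intervals of length roughly $r$ and let $Q$ be the cut edges between intervals, whereas the paper works directly on the \emph{edge set}, applying the Kechris--Solecki--Todor\v{c}evi\'c maximal independent set theorem to an auxiliary graph on $R|_Y$ (two edges adjacent iff they have endpoints within $\rho_\cG$-distance $r$) and then discarding chosen edges lying within distance $r$ of a leaf. Your verifications of (1)--(3) from the interval decomposition are correct (including the implicit adjustment of absorbing a terminal segment shorter than $N$ into its neighbour, which is needed to make condition (2) hold at the second leaf of a finite path), and the leaf-anchored greedy construction is Borel for the ray and finite-path components.

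The one step that does not work as written is the $\bbZ$-component case. Hyperfiniteness does give you a Borel automorphism $T$ generating $E_\cG$ on that part, but nothing forces $T$ to move a point to one of its two $\cG$-neighbours: that would amount to a Borel orientation of the forest of lines, which need not exist (this is essentially the end-selection obstruction the paper itself alludes to in the introduction). Consequently the standard marker lemma for the $\bbZ$-action bounds the gaps between consecutive markers in the $T$-word metric, not in $\rho_\cG$, and ``consecutive $\cG$-distances in $[N,2N]$'' does not follow from it. The statement you need is nevertheless true and standard, and the correct tool is precisely the one the paper uses: apply KST to the bounded-degree Borel graph $\{(x,y) : 0 < \rho_\cG(x,y) \le N\}$ to obtain a maximal Borel set $M$ whose points are pairwise more than $N$ apart; maximality then forces consecutive points of $M$ along each line to be at $\rho_\cG$-distance at most $2N$, since a midpoint of a longer gap would be at distance greater than $N$ from all of $M$. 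With that substitution (and the corresponding constant adjustments) your argument goes through; what the paper's edge-based version buys is that it treats all three component types uniformly and handles leaves by a single filtering step rather than by case analysis.
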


\begin{proof}
We can split the proof in three cases, following the decomposition $X = X_0 \sqcup X_1 \sqcup X_2$ as in Remark \ref{remark:components}. Let $R_i \coloneqq R|_{X_i}$ for $i = 0,1,2$.

For $X_0$, namely the set of vertices whose connected component is infinite, consider the Borel graph $(R_0, S)$ where, given two distinct $q_0, q_1 \in R_0$, we set $q_0 S q_1$ if $\rho_\cG(q_0, q_1) \le r$. The graph $(R_0, S)$ has finite degree since $\cG$ does, hence by \cite[Propositions 4.2 and 4.3]{KST} there exists a maximal \emph{$S$-independent} Borel set $Q_0 \subseteq R_0$, that is $q_0 \neg S q_1$ for all $q_0, q_1 \in Q_0$ and every $q \in R_0$ is $S$-adjacent to some element in $ Q_0$. It is then follows that $Q_0 \cup Q_0^{-1}$ has the desired properties for $(X_0, R_0)$.

Next, in $X_1$, let $Q_1$ be the set of all edges $(x, y) \in R_1$ such that, for some non-zero $k \in \bbN$, one has
\begin{equation*}
\rho_\cG(x, \leaves(\cG)) = k(r+2),\  \rho_\cG(y, \leaves(\cG)) = k(r+2) + 1.
\end{equation*}
Then $Q_1 \cup Q_1^{-1}$ satisfies the required conditions for $(X_1, R_1)$.

Finally, in $X_2$, since the equivalence relation induced by $(X_2, R_2)$ is finite and since $\leaves(\cG)$ is a Borel set, using a Borel selector (see \cite[Theorem 12.16 and Exercise 18.14]{Kechris:CDST}) it is possible to find a Borel set $A \subseteq X_2 \cap \leaves(\cG)$ such that $A$ contains exactly one leaf from each connected component of $(X_2, R_2)$. One can then define $Q_2$ as the set those edges $(x, y) \in R_2$ such that, for some non-zero $k \in \bbN$,
\begin{equation*}
\rho_\cG(x,A) = k(r+2),\  \rho_\cG(y, A) = k(r+2) + 1, \  \rho_\cG(y, \leaves(\cG) \setminus A) \ge r.
\end{equation*}
Then $Q_2 \cup Q_2^{-1}$ is as required for $(X_2, R_2)$.
\end{proof}

\subsection{Main criterion}
\begin{lemma} \label{lemma:quasi-oriented}
Let $\cG \coloneqq (X, R)$ be a Borel graph with $\deg(\cG) < \infty$. Suppose that for every $r > 0$ there exist two Borel  graphs $\cG_0 \coloneqq (X, R_0)$ and $\cG_1 \coloneqq (X, R_1)$ such that
\begin{enumerate}[label=(C.\arabic*)]
\item \label{quasi-oriented:item1} $R = R_0 \sqcup R_1$,
\item \label{quasi-oriented:item2} $\cG_0$ admits a Borel orientation $\vv{R}_0 \subseteq R_0$ with out-degree at most 1,
\item \label{quasi-oriented:item3} $\deg_{\cG_1}(x) + \degoutz(x) \le 2$ for all $x \in X$,
\item \label{quasi-oriented:item4} $\cG_1$ is acyclic,
\item \label{quasi-oriented:item5} if $1 < |[x]_{\cG_1}| < \infty$ and $\degoutz(y) = 1$ for every $y \in \leaves(\cG_1 \cap [x]_{\cG_1})$, then $ |[x]_{\cG_1}| > r$.
\end{enumerate}
Then $\asdim(X, \rho_\cG) \le 3$.
\end{lemma}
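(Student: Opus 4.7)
The plan is to reduce to Lemma \ref{lemma:delete_edges}: given $r > 0$, I will exhibit a symmetric Borel set $Q \subseteq R$ whose endpoints (for distinct edges) lie at $\rho_\cG$-distance strictly greater than $r$, and such that $\cG \setminus Q$ admits a Borel orientation of out-degree at most $1$; the conclusion $\asdim(X, \rho_\cG) \le 3$ then follows directly. To set things up, apply the hypothesis of the lemma with parameter $r^* \coloneqq 4r + 6$, obtaining Borel graphs $\cG_0 = (X, R_0)$ and $\cG_1 = (X, R_1)$ satisfying C.1--C.5 for $r^*$.

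Since $\cG_1$ is acyclic of degree at most $2$ (by C.3 and C.4), the next step is to run the construction underlying Lemma \ref{lemma:choose_edges} on $\cG_1$ with parameter $r$, but with the auxiliary independence relation on $R_1|_Y$ defined through $\rho_\cG$ (the distance in the full ambient graph) rather than the intrinsic distance $\rho_{\cG_1}$, so as to secure the $\rho_\cG$-far-apartness required by Lemma \ref{lemma:delete_edges}. The aim is to produce $Q \subseteq R_1$ with distinct $Q$-edges' endpoints at $\rho_\cG$-distance $> r$, with $Q$-adjacent vertices at $\rho_\cG$-distance $\ge r$ from $\leaves(\cG_1)$, and---this is the delicate part---every component of $\cG_1 \setminus Q$ of size at most $4r + 6$.

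Assuming $Q$ has been constructed, the orientation of $\cG \setminus Q = \cG_0 \cup (\cG_1 \setminus Q)$ is built by combining $\vv{R}_0$ on $R_0$ with, on each component $C$ of $\cG_1 \setminus Q$ (a path, ray, or bi-infinite line in $\cG_1$), an orientation ``away from'' a leaf with $\degoutz = 0$ (or either direction, if $C$ has no leaves). The existence of such a leaf is the main structural point: a leaf of $C$ arising from a $Q$-cut is an interior vertex of $\cG_1$, and hence has $\degoutz = 0$ by C.3. If both leaves of $C$ were leaves of $\cG_1$ with $\degoutz = 1$, then $C$ would coincide with a problematic $\cG_1$-component, and C.5 would force $|C| > r^* = 4r + 6$, contradicting the size bound. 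A per-vertex case analysis using C.3 then verifies the combined out-degree is at most $1$ at every vertex.

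The hard part is verifying the component-size bound under the modified construction in the second paragraph. In the original Lemma \ref{lemma:choose_edges}, the bound rests on the fact that the $\rho_\cG$-distance between distinct connected components of the ambient graph is infinite, so that $S$-adjacent pairs must lie in the same component; with $\cG_1$ properly embedded in a larger $\cG$, this is no longer automatic, since $\cG_0$-edges can create finite $\rho_\cG$-detours between different $\cG_1$-components. The fix should exploit C.3: interior vertices of $\cG_1$-components have $\degoutz = 0$, so no outgoing $\vv{R}_0$-edge can emanate from the interior of a $\cG_1$-component; this structural constraint is the ingredient I expect will tightly restrict the possible detours and recover an analogue of the original size bound.
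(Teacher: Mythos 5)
Your reduction to Lemma \ref{lemma:delete_edges} and your case analysis for the orientation of $\cG_1\setminus Q$ are in the right spirit, but the step you yourself flag as ``the hard part'' --- the component-size bound for the modified, $\rho_\cG$-based independent-set construction --- is a genuine gap, and in fact that bound is false in general, so the hoped-for fix via \ref{quasi-oriented:item3} cannot work as stated. Condition \ref{quasi-oriented:item3} forbids \emph{outgoing} $\vv{R}_0$-edges at interior vertices of $\cG_1$-components, but it places no restriction on \emph{incoming} ones, and the source $w$ of such an incoming edge satisfies $\degoutz(w)=1$, hence only $\deg_{\cG_1}(w)\le 1$ --- which allows $w$ to be a leaf of another arbitrarily large $\cG_1$-component. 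Concretely, let $\cG_1$ consist of a bi-infinite line $C=(u_i)_{i\in\bbZ}$ together with rays $D_i=(w_i=w_i^0,w_i^1,\dots)$, and let $\vv{R}_0=\{(w_i,u_i):i\in\bbZ\}$; conditions \ref{quasi-oriented:item1}--\ref{quasi-oriented:item5} all hold ( \ref{quasi-oriented:item5} vacuously, as every component is infinite). Here $\rho_\cG(w_i,w_j)=|i-j|+2$, so the family $\{(w_i,w_i^1): i\in (r-1)\bbZ\}$ is independent for your $\rho_\cG$-based adjacency relation and is already $\rho_\cG$-within $r$ of every edge of $C$; a maximal independent set extending it therefore contains no edge of $C$ at all, and $C$ survives as an infinite component of $\cG_1\setminus Q$. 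At that point your orientation step breaks down too: an out-degree-$\le 1$ orientation of a bi-infinite line amounts to selecting a sink vertex or an end, your construction supplies no Borel such selection, and for general Borel forests of lines none exists.

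The paper sidesteps this entirely: it applies Lemma \ref{lemma:choose_edges} to $\cG_1$ with its \emph{intrinsic} metric $\rho_{\cG_1}$ (distinct $\cG_1$-components are then at infinite distance, so the size bound is the easy one you already trust), and recovers the $\rho_\cG$-separation of $Q$-edges \emph{a posteriori} from the orientation rather than building it into the independent set. The mechanism is the one missing from your sketch: a vertex $x_0$ adjacent to a $Q$-edge is $\rho_{\cG_1}$-far from $\leaves(\cG_1)$, hence $\degoutz(x_0)=0$ by \ref{quasi-oriented:item3}, so the map $f$ following the out-degree-$\le 1$ orientation of $R\setminus Q$ moves $x_0$ only along $R_1$-edges and keeps $f^k(x_0)$ inside $[x_0]_{\cG_1}$ for all $k\le\lceil r\rceil$; combined with the identity $\rho_\cG(x_0,x_1)=\min\{n+m: f^n(x_0)=f^m(x_1)\}$, valid because the orientation has out-degree at most $1$, this converts the $\rho_{\cG_1}$-separation (or the disjointness of the two $\cG_1$-components) into the required lower bound on $\rho_\cG(x_0,x_1)$. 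Any repair of your variant would have to prove a quantitative ``no shortcut'' statement of exactly this kind anyway, so the economical route is to keep the intrinsic metric in the cutting lemma and perform the upgrade at the end.
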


\begin{proof}
We aim to apply Lemma \ref{lemma:delete_edges} to $\cG$. In order to do so, fix $r > 0$ and find Borel graphs  $\cG_0 \coloneqq (X, R_0)$ and $\cG_1 \coloneqq (X, R_1)$ satisfying the assumptions of the present lemma for the value $2r+4$. In particular, $\cG_0$ and $\cG_1$ satisfy \ref{quasi-oriented:item1}-\ref{quasi-oriented:item4} and
\begin{equation} \label{eq:5a}
 |[x]_{\cG_1}| > 2r +4, \text{ if } 1 < |[x]_{\cG_1}| < \infty \text{ and } \degoutz(y) = 1, \, \forall y \in \leaves(\cG_1 \cap [x]_{\cG_1}). \tag{C.5'}
 \end{equation}

Condition \ref{quasi-oriented:item3} entails $\deg(\cG_1) \le 2$, hence we can apply Lemma \ref{lemma:choose_edges} to find a symmetric Borel subset $Q \subseteq R_1$ such that
\begin{enumerate}[label=(\roman*)]
    \item \label{item1:reflemma} $\rho_\cG(q_0, q_1) > r$ whenever $q_0, q_1 \in Q$ represent distinct edges,
    \item \label{item2:reflemma} $\rho_{\cG_1}(x,y) \ge r$ whenever $x \in X$ is $R_1$-adjacent to an edge in $Q$, and $y \in \leaves(\cG_1)$,
    \item \label{item3:reflemma} $\abs{[x]_{(X, R_1 \setminus Q)}} \le 2r+4$ for any $x \in X$.
\end{enumerate}

Define $\cG_2 \coloneqq( X, R_1 \setminus Q)$. We show now how to construct a Borel orientation $\vv{R}_2$ on $\cG_2$ such that $\vv{R}_0 \cup \vv{R}_2$ is a Borel orientation of $(X, R \setminus Q)$ with out-degree at most 1. In order to define such $\vv{R}_2$, consider the Borel sets (see Remark \ref{remark:degree_maps})
\begin{align*}
Y_0 &\coloneqq \left\{ x \in X : |[x]_{\cG_2} | > 1 \text{ and there is } y \in [x]_{\cG_2} \text{ such that}\, \degoutz(y) =1 \right\}, \\
Y_1 &\coloneqq \left\{ x \in X : |[x]_{\cG_2} | > 1 \right\} \setminus Y_0.
\end{align*}

Condition \ref{quasi-oriented:item3} implies that if $x \in Y_0$ and $y \in [x]_{\cG_1}$ has $\degoutz(y) = 1$, then $y\in \leaves(\cG_1)$. If there were two distinct leaves $y_0,y_1\in [x]_{\cG_1}$ with $\degoutz(y_i) = 1$, for $i = 0,1$, then by \eqref{eq:5a} we would get $|[x]_{\cG_1}| > 2r+4$. This fact, combined with  \ref{item3:reflemma}, is sufficient to deduce that, for every $x \in X$, there exists at most one vertex $y \in [x]_{\cG_2}$ with $\degoutz(y) = 1$. In other words, if $A_0 \coloneqq (\degoutz)^{-1}(1) \cap Y_0$, then $|A_0 \cap [x]_{\cG_2}| = 1$ for every $x \in Y_0$.

On other hand, since $E_{\cG_2}$ is a finite equivalence relation by \ref{item3:reflemma}, and since finite Borel equivlence relations admit Borel selectors whose image is Borel (see \cite[Theorem 12.16 and Exercise 18.14]{Kechris:CDST}), there exists a Borel set $A_1 \subseteq Y_1$ such that $|A_1 \cap [x]_{\cG_2}| = 1$ for every $x\in Y_1$.

Set $A \coloneqq A_0 \cup A_1$. Since $\deg(\cG_2) \le 2$, the following is a well-defined orientation on $\cG_2$
\begin{equation*}
(x,y) \in \vv{R}_2 \Leftrightarrow \rho_{\cG_2}(y, A) < \rho_{\cG_2}(x, A).
\end{equation*}
By construction, $\vv{R}_0 \cup \vv{R}_2$ is a Borel orientation of $(X, R \setminus Q)$ with out-degree at most 1.

In order to apply Lemma \ref{lemma:delete_edges} and conclude the proof, it suffices to show that if $(x_0, y_0), (x_1, y_1) \in Q$ are such that $x_0 \ne x_1$ and $x_0 \ne y_1$, then $\rho_\cG(x_0, x_1) > r$. Without loss of generality, we may assume that the shortest path in $\cG$ connecting $x_0$ to $x_1$ only has edges in $R \setminus Q$ (note that the shortest path connecting $x_0$ to $x_1$ cannot be composed exclusively of edges in $Q$ by \ref{item1:reflemma}, unless $r < 1$, in which case $\rho_\cG(x_0, x_1) > r$ is automatic). Arguing as in  Proposition \ref{prop:oriented}, one sees that the Borel function $f \colon X \to X$
\begin{equation*}
f(x) \coloneqq
\begin{cases}
    y & \text{if } (x,y) \in \vv{R}_0 \sqcup \vv{R}_2, \\
    x & \text{otherwise},
\end{cases}
\end{equation*}
is well-defined. Since $x_0, x_1$, as well as the shortest path connecting them, are in the same connected component of $(X,R \setminus Q)$, and since $\vv{R}_0 \sqcup \vv{R}_2$ has out-degree at most 1, it is possible to prove by induction on $\rho_\cG(x_0,x_1)$ that
\begin{equation} \label{eq:geodesic}
\rho_\cG(x_0, x_1) = \min \{ n+m : f^n(x_0) = f^m(x_1) \}.
\end{equation}

Note that, by \ref{item2:reflemma}, $x_0$ is not a leaf of $\cG_1$,  hence by \ref{quasi-oriented:item3} we deduce $\degoutz(x_0) = 0$. This implies in particular that either $x_0 = f(x_0)$ or $x_0 R_1 f(x_0)$, and thus $f(x_0) \in [x_0]_{\cG_1}$. The same argument can be inductively repeated to infer that
\begin{equation*}
f^k(x_0) \in [x_0]_{\cG_1} \text { and either } f^k(x_0) = f^{k+1}(x_0) \text{ or } f^k(x_0) R_1 f^{k+1}(x_0), \quad k= 0,\dots,  \lceil r \rceil .
\end{equation*}
The same statement can be proved verbatim for $x_1$.

Thus, if $[x_0]_{\cG_1} \ne [x_1]_{\cG_1}$, then $f^k(x_0) \ne f^j(x_1)$ for all $0 \le k, j \le r$. If, on the other hand,  $[x_0]_{\cG_1} = [x_1]_{\cG_1}$, then condition \ref{item1:reflemma} implies $f^k(x_0) \ne f^j(x_1)$ whenever $k+j \le r$. In either case, using \eqref{eq:geodesic}, we conclude that $\rho_\cG(x_0, x_1) > r$.
\end{proof}

\section{U-amenability and finite Borel asymptotic dimension} \label{section:asdim}
In this section, we prove the main technical result of the paper, Theorem \ref{thm:ua_asdim}, asserting that a u-amenable acyclic Borel graph with finite degree has finite Borel asymptotic dimension, and thus induces a hyperfinite equivalence relation.

Before proceeding with the proof, we introduce some useful notation. Given a graph $\cG \coloneqq (X, R)$ and $xRy$, define the graph
\begin{equation} \label{eq:Gxy}
\cG_{x,y} \coloneqq (X, R \setminus \{ (x,y), (y,x) \}),
\end{equation}
which is the graph obtained from $\cG$ after removing the edge $xRy$. Note that, in case $\cG$ is acyclic, then
\begin{equation*}
[x]_{\cG_{x,y}} = \{z \in [x]_\cG \colon \rho_\cG(z, x) < \rho_\cG(z, y)\}, \, [y]_{\cG_{x,y}} = \{z \in [x]_\cG \colon \rho_\cG(z, y) < \rho_\cG(z,x)\},
\end{equation*}
and moreover
\begin{equation} \label{eq:partition}
[x]_\cG = [y]_\cG = [x]_{\cG_{x,y}} \sqcup [y]_{\cG_{x,y}}.
\end{equation}
Finally, for $\lambda \in \ell^1([x]_\cG)$ and $A \subseteq [x]_\cG$, we write $\lambda(A)$ for the sum $\sum_{x \in A} \lambda(x)$.

\begin{lemma}\label{lemma:theta_maps}
Let $\cG \coloneqq (X, R)$ be an acyclic Borel graph and let $\lambda : E_\cG \to \mathbb{R}_{\ge 0}$ be a Borel function such that $\lambda(x, \cdot)$, which we abbreviate as $\lambda_x$, belongs to $\ell^1([x]_\cG)$ for all $x \in X$. Then the maps
\[
\begin{aligned}[c]
\theta_0 \colon R &\to \bbR \\
(x,y) &\mapsto \lambda_{x}([x]_{\cG_{x,y}})
\end{aligned}
\qquad
\begin{aligned}[c]
\theta_1 \colon R &\to \bbR \\
(x,y) &\mapsto \lambda_{y}([y]_{\cG_{x,y}})
\end{aligned}
\]
are Borel.
\end{lemma}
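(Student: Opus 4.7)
The plan is to express $\theta_0$ and $\theta_1$ as countable sums of Borel functions, where the countable sum is realized via a Lusin--Novikov enumeration of the orbit equivalence classes.

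First I would apply the Lusin--Novikov uniformization theorem (\cite[Theorem 18.10]{Kechris:CDST}) to $E_\cG \subseteq X^2$ to obtain a sequence of Borel functions $F_k \colon X \to X$, $k \in \bbN$, such that $[x]_{E_\cG} = \{F_k(x) : k \in \bbN\}$ for every $x \in X$. Since $\cG$ is acyclic and $\rho_\cG \colon X \times X \to [0,\infty]$ is Borel, the sets
\begin{equation*}
A_k \coloneqq \{(x,y) \in R : \rho_\cG(F_k(x), x) < \rho_\cG(F_k(x), y)\}, \quad k \in \bbN,
\end{equation*}
are all Borel.

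The key observation is that, by acyclicity of $\cG$ and the partition \eqref{eq:partition}, for each $(x,y) \in R$ we have
\begin{equation*}
[x]_{\cG_{x,y}} = \{z \in [x]_\cG : \rho_\cG(z, x) < \rho_\cG(z, y)\},
\end{equation*}
so
\begin{equation*}
\theta_0(x,y) = \sum_{k \in \bbN,\, F_k(x) \in [x]_{\cG_{x,y}}} \lambda(x, F_k(x)) \cdot \mathbf{1}_{\{F_k(x) \ne F_j(x) \text{ for } j < k\}},
\end{equation*}
which, rewritten using the sets $A_k$ and the Borel uniqueness indicator, becomes a countable sum of Borel functions of $(x,y) \in R$. (The uniqueness indicator $\mathbf{1}_{\{F_k(x) \ne F_j(x) \text{ for } j < k\}}$ ensures we count each element of $[x]_\cG$ exactly once; this indicator is Borel since equality of $F_k(x)$ and $F_j(x)$ is a Borel condition.) Absolute summability of $\lambda_x$ guarantees pointwise convergence, so $\theta_0$ is Borel as a pointwise limit of Borel partial sums.

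The argument for $\theta_1$ is entirely analogous, using the same enumeration $(F_k)$ but applied to $y$ in place of $x$, and the sets $\{(x,y) \in R : \rho_\cG(F_k(y), y) < \rho_\cG(F_k(y), x)\}$ in place of the $A_k$. There is no real obstacle in this proof: the only point requiring care is ensuring that elements of $[x]_\cG$ are not double-counted in the enumeration, which is handled by the standard Borel uniqueness trick.
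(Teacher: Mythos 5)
Your proof is correct and is essentially the same as the paper's: both invoke Lusin--Novikov to enumerate the classes via Borel functions $F_k$, use acyclicity to characterize $[x]_{\cG_{x,y}}$ by the distance condition $\rho_\cG(z,x) < \rho_\cG(z,y)$, and realize $\theta_0$ as a pointwise limit of Borel partial sums (the paper merely packages the sum as an inductively defined sequence $w_n$ rather than writing it with indicator functions). No substantive difference.
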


\begin{proof}
It sufficies to prove the statement for $\theta_0$, since $\theta_1(x,y) =\theta_0(y,x)$. By the Lusin--Novikov uniformization theorem (\cite[Theorem 18.10]{Kechris:CDST}) there are Borel functions $F_n \colon X \to X$, for $n \in \bbN$, such that $E_\cG = \{(x,y) \in X^2 : \exists n \, (y = F_n(x)) \}$.
We inductively define a sequence of Borel functions $w_n \colon R \to \bbR$ as follows:
\begin{align*}
w_0(x,y) & \coloneqq 0 \text{ for all }  xRy\\
w_{n+1}(x,y) &\coloneqq \begin{cases} w_n(x,y) + \lambda_x(F_{n+1}(x)) & \smash{\raisebox{1.2ex}{if $F_{n+1}(x) \ne F_j(x)$ for all $j \le n$,}} \\
&\smash{\raisebox{1.5ex}{$\rho_\cG(x, F_{n+1}(x)) < \rho_\cG(y, F_{n+1}(x))$}} \\
w_n(x,y) &\text{otherwise.}
\end{cases}
\end{align*}
Since $\cG$ is acyclic, the sequence $(w_n)_{n \in \bbN}$ pointwise converges to $\theta_0$, which is therefore Borel.
\end{proof}

\begin{theorem}\label{thm:ua_asdim}
Let $\cG \coloneqq(X, R)$ be an acyclic Borel graph with $\deg(\cG) < \infty$. If $\cG$ is u-amenable then $\asdim(X, \rho_\cG) \le 1$. In particular, $E_\cG$ is hyperfinite.
\end{theorem}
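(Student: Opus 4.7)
I plan to prove $\asdim(X,\rho_\cG)\le 1$ by constructing a Borel orientation $\vv{R}\subseteq R$ of $\cG$ satisfying $\Degout(X,\vv{R})\le 1$, and then invoking Proposition~\ref{prop:oriented}. The hyperfiniteness of $E_\cG$ then follows from \cite[Theorem~1.7]{CJMST-D}. The advantage of targeting a full Borel orientation (rather than a partial one via Lemma~\ref{lemma:quasi-oriented}, which would only yield $\asdim\le 3$) is that the acyclicity of $\cG$ can be used directly to enforce the out-degree bound, without the edge-deletion step of Lemma~\ref{lemma:delete_edges}.

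Using u-amenability at scale $r=1$, I would fix $n_0\in\bbN$ so that $\|\lambda_{n_0,x}-\lambda_{n_0,y}\|_1<\epsilon$ uniformly over edges $(x,y)\in R$, for a small parameter $\epsilon$ to be chosen. Lemma~\ref{lemma:theta_maps} provides the Borel function $w(x,y)\coloneqq\lambda_{n_0,x}([y]_{\cG_{x,y}})$. The provisional orientation is $(x,y)\in\vv{R}$ iff $w(x,y)>\tfrac{1}{2}$, with a Borel tie-break on a fixed Borel linear order of $X$ for the equality case. Two key observations: first, because $\cG$ is acyclic, the sets $[y_i]_{\cG_{x,y_i}}$ corresponding to distinct neighbors $y_1,\dots,y_k$ of a vertex $x$ are pairwise disjoint in $[x]_\cG$; combined with $\|\lambda_{n_0,x}\|_1=1$, this forces at most one $y_i$ to satisfy $w(x,y_i)>\tfrac{1}{2}$, so $\Degout(X,\vv{R})\le 1$ automatically. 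Second, the uniform $\ell^1$ closeness forces $w(x,y)+w(y,x)=1+O(\epsilon)$ across edges, so the problematic case $(x,y),(y,x)\in\vv{R}$ can only arise on a Borel set of ``tied'' edges where $w(x,y)\in(\tfrac{1}{2},\tfrac{1}{2}+O(\epsilon))$.

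The core of the argument is to coherently orient these tied edges so that $\vv{R}\cap\vv{R}^{-1}=\emptyset$ without breaking $\Degout\le 1$. The tied edges form a Borel acyclic subgraph of $\cG$ of degree at most $2$ (again by the disjointness-plus-mass argument: three tied edges at one vertex would contribute at least $3(\tfrac{1}{2}-O(\epsilon))>1$), so its connected components are finite paths, one-ended rays, or bi-infinite lines. On components with a vertex already using its out-slot through a strict edge, the tied edges must be reoriented inward, pinning them down; on finite tied paths with both endpoints having used strict out-slots, a Borel selection of a sink vertex along the path finalizes the orientation. The main obstacle I foresee is ruling out tied components all of whose vertices have $\cG$-degree $2$ (so no strict edge guides the orientation), as this would reduce to a Borel end-selection problem, known to be unsolvable in general (cf.~\cite[Remark~5.11]{miller:ends}). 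To bypass this, the plan is to leverage u-amenability at scales larger than $1$: along a long tied stretch, the defect $\lambda_{n_0,x}([x]_{\cG_{x,y}})-\lambda_{n_0,y}([x]_{\cG_{x,y}})$ would telescope, accumulating a shift that exceeds the u-amenability bound at the corresponding scale and contradicting tiedness at the interior edges. This quantitative argument should confine tied components to finite, strictly-attached pieces that admit a Borel selection, completing the construction of $\vv{R}$.
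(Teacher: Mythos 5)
Your setup faithfully reproduces the first half of the paper's argument: the Borel functions $(x,y)\mapsto\lambda_{n_0,x}([y]_{\cG_{x,y}})$, the threshold at $\tfrac12$, and the disjointness-plus-mass computations that give out-degree at most $1$ on the strict edges and degree at most $2$ on the tied edges are all exactly the paper's conditions \ref{quasi-oriented:item2} and \ref{quasi-oriented:item3}. The gap is in the final step, and it is not repairable along the lines you propose. Your telescoping argument has a quantifier error: $n_0$ was chosen so that $\|\lambda_{n_0,x}-\lambda_{n_0,y}\|_1<\epsilon$ for \emph{adjacent} $x,y$, and u-amenability at scale $k$ controls $\lambda_n$ only for $n$ large \emph{depending on} $k$; for the fixed $n_0$ and points at distance $k$ the only available bound is $\|\lambda_{n_0,x_0}-\lambda_{n_0,x_k}\|_1\le k\epsilon$ from the triangle inequality, which grows with $k$ and points the wrong way for a contradiction. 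Concretely, take a Borel graph whose components are bi-infinite lines and let $\lambda_{n,x}$ be the uniform measure on the ball of radius $n$ about $x$ (the F{\o}lner construction of Proposition~\ref{prop:group_action}). Then for every edge both quantities equal $n_0/(2n_0+1)<\tfrac12$, so \emph{every} edge is tied, each component is a single infinite tied stretch with no strict edge to anchor it, and no contradiction of any kind is available. Your approach then reduces to Borel-selecting a sink vertex or an end from each line, which is exactly the end-selection problem the paper flags as unsolvable in general (\cite[Remark 5.11]{miller:ends}); this is why a full Borel orientation with $\Degout\le 1$ simply need not exist for u-amenable acyclic graphs, and why Proposition~\ref{prop:oriented} alone cannot carry the proof.

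The paper's resolution is to give up on orienting everything. For each $r$ it chooses a \emph{new} $\lambda_n$ (adapted to scale $r+2$), and instead of orienting the tied subgraph $\cG_1$ it deletes an $r$-sparse symmetric Borel set $Q$ of tied edges kept far from leaves (Lemma~\ref{lemma:choose_edges}), which chops the tied components into finite pieces of size at most $4r+6$; these finite pieces admit Borel-selected basepoints toward which the remaining tied edges can be oriented, yielding an out-degree-$\le 1$ orientation of $R\setminus Q$ only. Lemma~\ref{lemma:delete_edges} then gives $\asdim(X,\rho_\cG)\le 3$, and the upgrade to $\le 1$ comes not from a global orientation but from citing \cite[Theorem 1.1]{CJMST-D}. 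If you want to salvage your write-up, replace the end-game of your construction with this deletion-and-reorientation step; the quantitative estimates you already have (in particular the fact that a tied component with two strict out-edges at its leaves must be long, which is condition \ref{quasi-oriented:item5}) are precisely what Lemma~\ref{lemma:quasi-oriented} needs.
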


\begin{proof}
We aim to apply Lemma \ref{lemma:quasi-oriented} to $\cG$. To do so, fix $r > 0$. By u-amenability there exists a 
Borel map
\[
\lambda \colon E_\cG \to \bbR_{\ge 0}
\]
such that $\lambda_x \in \ell^1([x]_{\cG})$ with $\| \lambda_x \|_1 = 1$ for all $x \in X$, and such that
\begin{equation} \label{eq:amenable}
\| \lambda_x - \lambda_y \|_1 < \frac{1}{12}, \quad \text{if } \rho_\cG(x, y) \le r +2, \, x,y \in X.
\end{equation}

Let $\theta_0, \theta_1 \colon R \to \bbR_{\ge 0}$ be the maps defined as
\begin{equation*}
\theta_0(x, y) \coloneqq \lambda_x([x]_{\cG_{x,y}}) \text{ and } \theta_1(x, y) \coloneqq \lambda_y([y]_{\cG_{x,y}}), \quad (x,y) \in R,
\end{equation*}
which are Borel by Lemma \ref{lemma:theta_maps} (see \eqref{eq:Gxy} for the definition of $\cG_{x,y}$).

Consider the set of edges
\begin{equation*}
R_0 \coloneqq \left\{ (x,y) \in R \colon  \theta_0(x, y), \theta_1(x, y) \notin  [5/12,7/12 ] \right\}.
\end{equation*}
The set $R_0$ is Borel since $\theta_0$ and $\theta_1$ are. Given $xRy$ we have 
\begin{equation*}
\abs{(1 - \theta_0(x, y)) - \theta_1(x, y)} \stackrel{\eqref{eq:partition}}{=} \abs{\lambda_x([y]_{\cG_{x,y}}) - \lambda_y([y]_{\cG_{x,y}})} \stackrel{\eqref{eq:amenable}}{<} 1/12.
\end{equation*}
Hence, if $xR_0y$, then exactly one between either
\begin{equation} \label{eq:1}
\theta_0(x, y) > 7/12\ \text{ and }\ \theta_1(x, y) < 5/12
\end{equation}
or
\begin{equation} \label{eq:2}
\theta_0(x, y) < 5/12 \ \text{ and }\  \theta_1(x, y) > 7/12
\end{equation}
holds. Using \eqref{eq:1}-\eqref{eq:2}, along with $\theta_0(x,y) = \theta_1(y,x)$, we define a Borel orientation $\vv{R}_0 \subset R_0$ by setting
\begin{equation*}
\vv{R}_0 \coloneqq \{ (x,y) \in R_0 : \theta_0(x, y) < 5/12, \, \theta_1(x, y) > 7/12 \}.
\end{equation*}

Let $R_1 \coloneqq R \setminus R_0$ and $\cG_1 \coloneqq (X, R_1)$. We verify now that $\cG_0 = (X, R_0)$ with orientation $\vv{R}_0$ and $\cG_1$ satisfy the five conditions of Lemma \ref{lemma:quasi-oriented}, which is enough to conclude the proof, since $\asdim(X, \rho_\cG) \le 3$ implies that $\asdim(X, \rho_\cG) \le 1$, by \cite[Theorem 1.1]{CJMST-D}.

\textbf{\ref{quasi-oriented:item1}:} this condition holds by definition.

\textbf{\ref{quasi-oriented:item2}:} let $x \in X$ and suppose that $y_0, y_1 \in X$ are distinct such that $x \vv{R}_0 y_0$ and $x \vv{R}_0 y_1$. As $\cG$ is acyclic, the trees $[y_0]_{\cG_{x, y_0}}$
and $[y_1]_{\cG_{x, y_1}}$ are disjoint subsets of $[x]_\cG$, and therefore we have
\begin{equation*}
\begin{split}
1 &\ge \lambda_x([y_0]_{\cG_{x, y_0}}) + \lambda_x([y_1]_{\cG_{x, y_1}})\\
&\stackrel{\mathclap{\eqref{eq:amenable}}}{>} \lambda_{y_0}([y_0]_{\cG_{x, y_0}}) + \lambda_{y_1}([y_1]_{\cG_{x, y_1}}) - 2/12 \\
& > 7/12+7/12-2/12 = 1,
\end{split}
\end{equation*}
which is a contradiction.

\textbf{\ref{quasi-oriented:item3}:} suppose that $y_0, y_1$ and $y_2$ are three distinct points in $[x]_\cG$ such that either $x \vv{R}_0 y_i$ or $x R_1 y_i$ holds for all $i=0, 1, 2$. Then $[y_0]_{\cG_{x, y_0}}, [y_1]_{\cG_{x, y_1}}$ and $[y_2]_{\cG_{x, y_2}}$ are disjoint and we have
\begin{align*}
1 &\ge \lambda_x([y_0]_{\cG_{x, y_0}}) + \lambda_x([y_1]_{\cG_{x, y_1}}) + \lambda_x([y_2]_{\cG_{x, y_2}})\\
&\stackrel{\mathclap{\eqref{eq:amenable}}}{>} \lambda_{y_0}([y_0]_{\cG_{x, y_0}}) + \lambda_{y_1}([y_1]_{\cG_{x, y_1}}) + \lambda_{y_2}([y_2]_{\cG_{x, y_2}}) - 3/12 \\
& \ge 5/12+5/12+5/12 - 3/12 = 1.
\end{align*}

\textbf{\ref{quasi-oriented:item4}:} this condition is automatic since $\cG$ is acyclic.

\textbf{\ref{quasi-oriented:item5}:} assume that there are two leaves $x_0, x_1$ in the same connected component $[x]_{\cG_1}$ of $\cG_1$ such that there are $y_0, y_1 \in X$ with $x_0 \vv{R}_0 y_0$ and $x_1 \vv{R}_0 y_1$. Note that $[y_0]_{\cG_{x_0, y_0}}$ and $[y_1]_{\cG_{x_1, y_1}}$ are disjoint. If $\rho_{\cG_1}(x_0, x_1) \le r$ then $\rho_{\cG_1}(y_0, y_1) \le r+2$ hence
\begin{align*}
1 &\ge \lambda_{y_0}([y_0]_{\cG_{x_0, y_0}}) + \lambda_{y_0}([y_1]_{\cG_{x_1, y_1}})\\
&\stackrel{\mathclap{\eqref{eq:amenable}}}{>} \lambda_{y_0}([y_0]_{\cG_{x_0, y_0}}) + \lambda_{y_1}([y_1]_{\cG_{x_1, y_1}}) - 1/12 \\
& > 7/12+7/12-1/12 > 1.
\end{align*}
This is a contradiction, hence $\rho_{\cG_1}(x_0, x_1) > r$ which in turn gives $|[x]_{\cG_1}| > r$.

Hyperfiniteness of $E_\cG$ follows by \cite[Theorem 1.7]{CJMST-D}.
\end{proof}

We put all pieces finally together and deduce Theorem \ref{thm_main:hyper_treeable}, and its corollaries Theorem \ref{thm_main:Fk}, Corollary \ref{cor_intro:group}, and Corollary \ref{cor_intro:bounded}.
\begin{corollary}\label{cor: main result}
Let $E$ be a countable Borel equivalence relation which is treeable and hyper-u-amenable. Then $E$ is hyperfinite.
\end{corollary}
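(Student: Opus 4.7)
The plan is to assemble the machinery developed in the earlier sections: Proposition~\ref{prop: increasing union graphings} lets us upgrade hyper-u-amenability to a convenient graphing of the relation, Theorem~\ref{thm:ua_asdim} controls the asymptotic dimension of each piece, and Proposition~\ref{prop:union} handles the passage to the increasing union.

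More precisely, since $E$ is treeable, I start by fixing an acyclic Borel graphing $\cT \coloneqq (X, R)$ of $E$. Since $E$ is hyper-u-amenable, applying Proposition~\ref{prop: increasing union graphings} to this particular graphing $\cT$ produces Borel graphs $\cT_n \coloneqq (X, R_n)$ of finite degree which are u-amenable, with $R_n \subseteq R_{n+1}$ and $R = \bigcup_{n \in \bbN} R_n$. The key observation at this point is that each $\cT_n$ inherits acyclicity from $\cT$, because $R_n \subseteq R$. Thus every $\cT_n$ is an acyclic, finite-degree, u-amenable Borel graph, so Theorem~\ref{thm:ua_asdim} yields $\asdim(X, \rho_{\cT_n}) \le 1 < \infty$ for all $n$.

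To conclude, I invoke Proposition~\ref{prop:union}. Its hypotheses are met: $R_n \subseteq R_{n+1}$ holds by construction, $\asdim(X, \rho_{\cT_n})$ is finite by the previous step, and $(X, \rho_{\cT_n})$ is proper because $\cT_n$ has finite degree (hence the closed balls are finite). It follows that $\bigcup_{n \in \bbN} E_{\cT_n}$ is hyperfinite. Since $R = \bigcup_{n \in \bbN} R_n$, the equivalence relation $E = E_\cT$ is exactly the increasing union $\bigcup_{n \in \bbN} E_{\cT_n}$, so $E$ is hyperfinite.

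The only substantive content to verify is that $E_\cT = \bigcup_n E_{\cT_n}$, which is immediate since any $E_\cT$-edge already lies in some $R_n$, so a finite $\cT$-path between two $E$-equivalent points is ultimately a $\cT_n$-path for some large $n$. All the genuine difficulty has been absorbed into Theorem~\ref{thm:ua_asdim} and Proposition~\ref{prop: increasing union graphings}; the corollary itself is only their formal combination, with no new obstacle to overcome.
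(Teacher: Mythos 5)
Your proof is correct and follows exactly the paper's argument: apply Proposition~\ref{prop: increasing union graphings} to an acyclic graphing, note the subgraphs inherit acyclicity, invoke Theorem~\ref{thm:ua_asdim} for finite Borel asymptotic dimension, and conclude via Proposition~\ref{prop:union}. The extra checks you make (properness of each $(X,\rho_{\cT_n})$ from finite degree, and $E_\cT = \bigcup_n E_{\cT_n}$) are exactly the details the paper leaves implicit, and they are handled correctly.
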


\begin{proof}
Let $\cG$ be an acyclic Borel graph such that $E_\cG = E$. By Proposition \ref{prop: increasing union graphings} the graph $\cG$ is a union of an increasing sequence of u-amenable subgraphs $\cG_n$ that have finite degree. The graphs $\cG_n$ are acyclic, and therefore $\asdim(X, \rho_{\cG_n}) < \infty$ for all $n \in \bbN$, by Theorem \ref{thm:ua_asdim}. The conclusion then follows by Proposition \ref{prop:union}.
\end{proof}

\begin{corollary} \label{cor:applications}
Let $X$ be a standard Borel space and let $E \subseteq X^2$ be a treeable countable Borel equivalence relation. Suppose that either one of the following three assumptions holds:
\begin{enumerate}
\item $E$ is measure-hyperfinite and $E = E_G^X$ for an action $G \curvearrowright X$ by a countable group such that the stabilizer of every point is amenable, and there is a Polish $\sigma$-compact topology on $X$ for which such action is continuous,
\item $E = E_G^X$ for some Borel action $G \curvearrowright X$ by a countable amenable group $G$,
\item $E$ is amenable and Borel bounded.
\end{enumerate}
Then $E$ is hyperfinite.
\end{corollary}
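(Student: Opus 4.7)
The plan is to observe that the three hypotheses have been carefully crafted so that each one independently yields hyper-u-amenability of $E$, and then to invoke Corollary \ref{cor: main result} (the treeable hyper-u-amenable case of Theorem \ref{thm_main:hyper_treeable}) to conclude hyperfiniteness. So the proof is essentially a ``packaging'' argument that cites the three results from Section \ref{section: u-amenability} together with the main theorem of Section \ref{section:asdim}.

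More concretely, under hypothesis (1), the assumptions match those of Corollary \ref{cor: continuous action} verbatim: $E_G^X$ is measure-hyperfinite, stabilizers are amenable, and the group acts continuously on a $\sigma$-compact Polish space, so that corollary directly delivers hyper-u-amenability. Under hypothesis (2), Proposition \ref{prop:group_action} shows that the orbit equivalence relation of any Borel action of a countable amenable group on a standard Borel space is hyper-u-amenable. Under hypothesis (3), Proposition \ref{prop: Borel bounded} yields hyper-u-amenability of any amenable, Borel bounded, countable Borel equivalence relation.

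In each of the three cases, $E$ is simultaneously treeable (by assumption) and hyper-u-amenable (by the argument above). Corollary \ref{cor: main result} then immediately gives that $E$ is hyperfinite, completing the proof.

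There is no real obstacle: the hard work has been done upstream in the paper, both in establishing hyper-u-amenability for the three relevant classes (Corollary \ref{cor: continuous action}, Proposition \ref{prop:group_action}, Proposition \ref{prop: Borel bounded}) and in proving the main criterion Theorem \ref{thm_main:hyper_treeable}/Corollary \ref{cor: main result}. The only thing to be careful about is verifying that the precise hypotheses of each cited result line up with what is assumed in (1), (2), (3); this is immediate by inspection.
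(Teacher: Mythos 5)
Your proposal is correct and follows exactly the same route as the paper: each of the three hypotheses yields hyper-u-amenability via Corollary \ref{cor: continuous action}, Proposition \ref{prop:group_action}, and Proposition \ref{prop: Borel bounded} respectively, and then Corollary \ref{cor: main result} gives hyperfiniteness. Nothing is missing.
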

\begin{proof}
The equivalence relation $E$ is hyper-u-amenable under either of the three assumptions in the statement, respectively by Corollary \ref{cor: continuous action}, Proposition \ref{prop:group_action}, and Proposition \ref{prop: Borel bounded}. The conclusion then follows by Corollary \ref{cor: main result}.
\end{proof}

\bibliography{main.bib}

@book {Kechris:CDST,
    AUTHOR = {Kechris, Alexander S.},
     TITLE = {Classical descriptive set theory},
    SERIES = {Graduate Texts in Mathematics},
    VOLUME = {156},
 PUBLISHER = {Springer-Verlag, New York},
      YEAR = {1995},
     PAGES = {xviii+402},
      ISBN = {0-387-94374-9},
   MRCLASS = {03E15 (03-01 03-02 04A15 28A05 54H05 90D44)},
  MRNUMBER = {1321597},
MRREVIEWER = {Jakub Jasi\'{n}ski},
       DOI = {10.1007/978-1-4612-4190-4},
       URL = {https://doi.org/10.1007/978-1-4612-4190-4},
}

@book {cohen:cgt,
    AUTHOR = {Cohen, Daniel E.},
     TITLE = {Combinatorial group theory: a topological approach},
    SERIES = {London Mathematical Society Student Texts},
    VOLUME = {14},
 PUBLISHER = {Cambridge University Press, Cambridge},
      YEAR = {1989},
     PAGES = {x+310},
      ISBN = {0-521-34133-7; 0-521-34936-2},
   MRCLASS = {20-01 (20E06 20F05 57M05 57M10 57M20)},
  MRNUMBER = {1020297},
MRREVIEWER = {Stephen\ J.\ Pride},
       DOI = {10.1017/CBO9780511565878},
       URL = {https://doi.org/10.1017/CBO9780511565878},
}

@article {thomas,
    AUTHOR = {Thomas, Simon},
     TITLE = {Martin's conjecture and strong ergodicity},
   JOURNAL = {Arch. Math. Logic},
  FJOURNAL = {Archive for Mathematical Logic},
    VOLUME = {48},
      YEAR = {2009},
    NUMBER = {8},
     PAGES = {749--759},
      ISSN = {0933-5846},
   MRCLASS = {03E15 (03D28)},
  MRNUMBER = {2563815},
       DOI = {10.1007/s00153-009-0148-0},
       URL = {https://doi.org/10.1007/s00153-009-0148-0},
}

@incollection {BJ,
    AUTHOR = {Boykin, Charles M. and Jackson, Steve},
     TITLE = {Borel boundedness and the lattice rounding property},
 BOOKTITLE = {Advances in logic},
    SERIES = {Contemp. Math.},
    VOLUME = {425},
     PAGES = {113--126},
 PUBLISHER = {Amer. Math. Soc., Providence, RI},
      YEAR = {2007},
   MRCLASS = {03E15 (52C20)},
  MRNUMBER = {2322367},
MRREVIEWER = {Miroslav Repick\'{y}},
       DOI = {10.1090/conm/425/08121},
       URL = {https://doi.org/10.1090/conm/425/08121},
}

@article {FM,
    AUTHOR = {Feldman, Jacob and Moore, Calvin C.},
     TITLE = {Ergodic equivalence relations, cohomology, and von {N}eumann
              algebras. {I}},
   JOURNAL = {Trans. Amer. Math. Soc.},
  FJOURNAL = {Transactions of the American Mathematical Society},
    VOLUME = {234},
      YEAR = {1977},
    NUMBER = {2},
     PAGES = {289--324},
      ISSN = {0002-9947},
   MRCLASS = {22D40 (28A65 46L10)},
  MRNUMBER = {578656},
       DOI = {10.2307/1997924},
       URL = {https://doi.org/10.2307/1997924},
}

@incollection {SlamanSteel,
    AUTHOR = {Slaman, Theodore A. and Steel, John R.},
     TITLE = {Definable functions on degrees},
 BOOKTITLE = {Cabal {S}eminar 81--85},
    SERIES = {Lecture Notes in Math.},
    VOLUME = {1333},
     PAGES = {37--55},
 PUBLISHER = {Springer, Berlin},
      YEAR = {1988},
   MRCLASS = {03D30 (03E60)},
  MRNUMBER = {960895},
MRREVIEWER = {Peter G. Hinman},
       DOI = {10.1007/BFb0084969},
       URL = {https://doi.org/10.1007/BFb0084969},
}

@article {MarSab,
    AUTHOR = {Marquis, Timoth\'{e}e and Sabok, Marcin},
     TITLE = {Hyperfiniteness of boundary actions of hyperbolic groups},
   JOURNAL = {Math. Ann.},
  FJOURNAL = {Mathematische Annalen},
    VOLUME = {377},
      YEAR = {2020},
    NUMBER = {3-4},
     PAGES = {1129--1153},
      ISSN = {0025-5831},
   MRCLASS = {03E15 (05E18 20F65 37A20)},
  MRNUMBER = {4126891},
MRREVIEWER = {Filippo Calderoni},
       DOI = {10.1007/s00208-020-02001-9},
       URL = {https://doi.org/10.1007/s00208-020-02001-9},
}

@article {SS,
    title={Locally nilpotent groups and hyperfinite equivalence
relations}, 
      author={Schneider, S. and Seward, B.},
      year={2013},
      eprint={https://arxiv.org/abs/1308.5853},
      archivePrefix={arXiv},
      primaryClass={math.DS}
}

@article {GJ,
    AUTHOR = {Gao, Su and Jackson, Steve},
     TITLE = {Countable abelian group actions and hyperfinite equivalence
              relations},
   JOURNAL = {Invent. Math.},
  FJOURNAL = {Inventiones Mathematicae},
    VOLUME = {201},
      YEAR = {2015},
    NUMBER = {1},
     PAGES = {309--383},
      ISSN = {0020-9910},
   MRCLASS = {03E15 (20K99)},
  MRNUMBER = {3359054},
MRREVIEWER = {Barbara Majcher-Iwanow},
       DOI = {10.1007/s00222-015-0603-y},
       URL = {https://doi.org/10.1007/s00222-015-0603-y},
}

@article {gaboriau,
    AUTHOR = {Gaboriau, Damien},
     TITLE = {Co\^{u}t des relations d'\'{e}quivalence et des groupes},
   JOURNAL = {Invent. Math.},
  FJOURNAL = {Inventiones Mathematicae},
    VOLUME = {139},
      YEAR = {2000},
    NUMBER = {1},
     PAGES = {41--98},
      ISSN = {0020-9910},
   MRCLASS = {28D20 (20E05 37A15 46L55)},
  MRNUMBER = {1728876},
MRREVIEWER = {B. Kami\'{n}ski},
       DOI = {10.1007/s002229900019},
       URL = {https://doi.org/10.1007/s002229900019},
}

@article {CFW,
    AUTHOR = {Connes, A. and Feldman, J. and Weiss, B.},
     TITLE = {An amenable equivalence relation is generated by a single
              transformation},
   JOURNAL = {Ergodic Theory Dynam. Systems},
  FJOURNAL = {Ergodic Theory and Dynamical Systems},
    VOLUME = {1},
      YEAR = {1981},
    NUMBER = {4},
     PAGES = {431--450 (1982)},
      ISSN = {0143-3857},
   MRCLASS = {46L55 (22D40)},
  MRNUMBER = {662736},
       DOI = {10.1017/s014338570000136x},
       URL = {https://doi.org/10.1017/s014338570000136x},
}

@article {tree-like,
    AUTHOR = {Chen, Ruiyuan and Poulin, Antoine and Tao, Ran and Tserunyan,
              Anush},
     TITLE = {Tree-like graphings, wallings, and median graphings of
              equivalence relations},
   JOURNAL = {Forum Math. Sigma},
  FJOURNAL = {Forum of Mathematics. Sigma},
    VOLUME = {13},
      YEAR = {2025},
     PAGES = {Paper No. e64},
   MRCLASS = {03E15 (20E08 20F65 37A20)},
  MRNUMBER = {4883098},
       DOI = {10.1017/fms.2025.22},
       URL = {https://doi.org/10.1017/fms.2025.22},
}

@article {miller:treeable,
    AUTHOR = {Miller, Benjamin D.},
     TITLE = {Incomparable treeable equivalence relations},
   JOURNAL = {J. Math. Log.},
  FJOURNAL = {Journal of Mathematical Logic},
    VOLUME = {12},
      YEAR = {2012},
    NUMBER = {1},
     PAGES = {1250004, 11},
      ISSN = {0219-0613},
   MRCLASS = {03E15 (37A20)},
  MRNUMBER = {2950194},
MRREVIEWER = {Miroslav Repick\'{y}},
       DOI = {10.1142/S0219061312500043},
       URL = {https://doi.org/10.1142/S0219061312500043},
}

@article {hjort:treeable,
    AUTHOR = {Hjorth, Greg},
     TITLE = {Treeable equivalence relations},
   JOURNAL = {J. Math. Log.},
  FJOURNAL = {Journal of Mathematical Logic},
    VOLUME = {12},
      YEAR = {2012},
    NUMBER = {1},
     PAGES = {1250003, 21},
      ISSN = {0219-0613},
   MRCLASS = {03E15 (37A20)},
  MRNUMBER = {2950193},
MRREVIEWER = {Miroslav Repick\'{y}},
       DOI = {10.1142/S0219061312500031},
       URL = {https://doi.org/10.1142/S0219061312500031},
}

@article {adams:trees,
    AUTHOR = {Adams, Scott},
     TITLE = {Trees and amenable equivalence relations},
   JOURNAL = {Ergodic Theory Dynam. Systems},
  FJOURNAL = {Ergodic Theory and Dynamical Systems},
    VOLUME = {10},
      YEAR = {1990},
    NUMBER = {1},
     PAGES = {1--14},
      ISSN = {0143-3857},
   MRCLASS = {28D15 (22D40 46L99)},
  MRNUMBER = {1053796},
MRREVIEWER = {Mahendra G. Nadkarni},
       DOI = {10.1017/S0143385700005368},
       URL = {https://doi.org/10.1017/S0143385700005368},
}

@incollection {weiss,
    AUTHOR = {Weiss, Benjamin},
     TITLE = {Measurable dynamics},
 BOOKTITLE = {Conference in modern analysis and probability ({N}ew {H}aven,
              {C}onn., 1982)},
    SERIES = {Contemp. Math.},
    VOLUME = {26},
     PAGES = {395--421},
 PUBLISHER = {Amer. Math. Soc., Providence, RI},
      YEAR = {1984},
   MRCLASS = {28D99},
  MRNUMBER = {737417},
MRREVIEWER = {R. L. Adler},
       DOI = {10.1090/conm/026/737417},
       URL = {https://doi.org/10.1090/conm/026/737417},
}

@article {CJMST-D,
    AUTHOR = {Conley, Clinton T. and Jackson, Steve C. and Marks, Andrew S.
              and Seward, Brandon M. and Tucker-Drob, Robin D.},
     TITLE = {Borel asymptotic dimension and hyperfinite equivalence
              relations},
   JOURNAL = {Duke Math. J.},
  FJOURNAL = {Duke Mathematical Journal},
    VOLUME = {172},
      YEAR = {2023},
    NUMBER = {16},
     PAGES = {3175--3226},
      ISSN = {0012-7094},
   MRCLASS = {03E15 (37A20 37A55)},
  MRNUMBER = {4679959},
       DOI = {10.1215/00127094-2022-0100},
       URL = {https://doi.org/10.1215/00127094-2022-0100},
}

@article {PrSa,
    AUTHOR = {Przytycki, Piotr and Sabok, Marcin},
     TITLE = {Unicorn paths and hyperfiniteness for the mapping class group},
   JOURNAL = {Forum Math. Sigma},
  FJOURNAL = {Forum of Mathematics. Sigma},
    VOLUME = {9},
      YEAR = {2021},
     PAGES = {Paper No. e36, 10},
   MRCLASS = {57K20 (03E15 37A20)},
  MRNUMBER = {4252215},
MRREVIEWER = {Thomas Koberda},
       DOI = {10.1017/fms.2021.34},
       URL = {https://doi.org/10.1017/fms.2021.34},
}

@article {NV,
    AUTHOR = {Naryshkin, Petr and Vaccaro, Andrea},
     TITLE = {Hyperfiniteness and {B}orel asymptotic dimension of boundary
              actions of hyperbolic groups},
   JOURNAL = {Math. Ann.},
  FJOURNAL = {Mathematische Annalen},
    VOLUME = {392},
      YEAR = {2025},
    NUMBER = {1},
     PAGES = {197--208},
      ISSN = {0025-5831},
   MRCLASS = {Prelim},
  MRNUMBER = {4887757},
       DOI = {10.1007/s00208-024-03075-5},
       URL = {https://doi.org/10.1007/s00208-024-03075-5},
}

@article {Oyakawa,
    AUTHOR = {Oyakawa, Koichi},
     TITLE = {Hyperfiniteness of boundary actions of acylindrically
              hyperbolic groups},
   JOURNAL = {Forum Math. Sigma},
  FJOURNAL = {Forum of Mathematics. Sigma},
    VOLUME = {12},
      YEAR = {2024},
     PAGES = {Paper No. e32, 31},
   MRCLASS = {20F65 (03E15 37A20)},
  MRNUMBER = {4715159},
MRREVIEWER = {Jingyin Huang},
       DOI = {10.1017/fms.2024.24},
       URL = {https://doi.org/10.1017/fms.2024.24},
}

@article {trees,
    AUTHOR = {Kunnawalkam Elayavalli, Srivatsav and Oyakawa, Koichi and
              Shinko, Forte and Spaas, Pieter},
     TITLE = {Hyperfiniteness for group actions on trees},
   JOURNAL = {Proc. Amer. Math. Soc.},
  FJOURNAL = {Proceedings of the American Mathematical Society},
    VOLUME = {152},
      YEAR = {2024},
    NUMBER = {9},
     PAGES = {3657--3664},
      ISSN = {0002-9939},
   MRCLASS = {03E15 (20E08 20F65 37A05 54H05)},
  MRNUMBER = {4781963},
       DOI = {10.1090/proc/16851},
       URL = {https://doi.org/10.1090/proc/16851},
}

@unpublished{Karp,
author = {Karpinski, Chris},
title = {Hyperfiniteness of boundary actions of relatively hyperbolic groups},
note = {arXiv:2212.00236},
year = {2022},
}

@unpublished{FKSV,
author = {Frisch, J. and Kechris, A. and Shinko, F. and Vidnyánszky, Z.},
title = {Realizations of countable {B}orel equivalence relations},
note = {arXiv:2109.12486},
year = {2021},
}

@article {KST,
    AUTHOR = {Kechris, A. S. and Solecki, S. and Todorcevic, S.},
     TITLE = {Borel chromatic numbers},
   JOURNAL = {Adv. Math.},
  FJOURNAL = {Advances in Mathematics},
    VOLUME = {141},
      YEAR = {1999},
    NUMBER = {1},
     PAGES = {1--44},
      ISSN = {0001-8708},
   MRCLASS = {03E05 (03E60 05C15)},
  MRNUMBER = {1667145},
MRREVIEWER = {Jakub Jasi\'{n}ski},
       DOI = {10.1006/aima.1998.1771},
       URL = {https://doi.org/10.1006/aima.1998.1771},
}

@article {JKL,
    AUTHOR = {Jackson, S. and Kechris, A. S. and Louveau, A.},
     TITLE = {Countable {B}orel equivalence relations},
   JOURNAL = {J. Math. Log.},
  FJOURNAL = {Journal of Mathematical Logic},
    VOLUME = {2},
      YEAR = {2002},
    NUMBER = {1},
     PAGES = {1--80},
      ISSN = {0219-0613},
   MRCLASS = {03E15 (22A05)},
  MRNUMBER = {1900547},
MRREVIEWER = {Otmar Spinas},
       DOI = {10.1142/S0219061302000138},
       URL = {https://doi.org/10.1142/S0219061302000138},
}

@article{miller:ends,
  title = {Ends of graphed equivalence relations, {I}},
  volume = {169},
  ISSN = {1565-8511},
  url = {http://dx.doi.org/10.1007/s11856-009-0015-z},
  DOI = {10.1007/s11856-009-0015-z},
  number = {1},
  journal = {Israel Journal of Mathematics},
  publisher = {Springer Science and Business Media LLC},
  author = {Miller,  Benjamin D.},
  year = {2008},
  month = nov,
  pages = {375–392}
}

@unpublished{ElekTimar,
author = {Elek,  Gábor and Timár,  \'Adám},
title = {Uniform {B}orel amenability is equivalent to randomized hyperfiniteness},
note = {arXiv:2408.12565},
year = {2024},
}
\end{document}